\newcommand{\communique}{\leftrightarrow}
\newcommand{\N}{\mathbb{Z}_{+}}
\newcommand{\Zd}{\mathbb{Z}^d}
\newcommand{\C}{\mathbb{C}}
\newcommand{\R}{\mathbb{R}}
\newcommand{\Rd}{\mathbb{R}^d}
\renewcommand{\P}{\mathbb{P}}
\newcommand{\E}{\mathbb{E}}
\newcommand{\Ed}{\mathbb{E}^d}
\renewcommand{\epsilon}{\varepsilon}
\renewcommand{\phi}{\varphi}
\renewcommand{\limsup}{\overline{\lim}}
\newcommand{\ie}{\emph{i.e. }}
\newcommand{\miniop}[3]{%
\renewcommand{\arraystretch}{0.6}
\begin{array}{c}
{\scriptstyle #1}\\
#2\\
{\scriptstyle #3}
\end{array}
\renewcommand{\arraystretch}{1}}
\newcommand{\1}{1\hspace{-1.3mm}1}
\newcommand{\Var}{\text{Var }}
\newcommand{\A}[1]{A_{\ref{#1}}}
\newcommand{\alphaa}[1]{\alpha_{\ref{#1}}}
\newcommand{\B}[1]{B_{\ref{#1}}}
\newcommand{\betaa}[1]{\beta_{\ref{#1}}}
\renewcommand{\C}[1]{C_{\ref{#1}}}
\newcommand{\D}[1]{D_{\ref{#1}}}
\begin{document}

\newtheorem{theorem}{Theorem}[section]
\newtheorem{conjecture}[theorem]{Conjecture}
\newcommand{\titre}{\title}

\newtheorem{lemme}[theorem]{Lemma}
\newtheorem{defi}[theorem]{Definition}
\newtheorem{coro}[theorem]{Corollary}
\newtheorem{rem}[theorem]{Remark}
\newtheorem{prop}[theorem]{Proposition}

\title[Moderate deviations for the chemical distance]{Moderate deviations for the chemical distance in Bernoulli percolation}
%\date{\today}

\author{Olivier Garet}
\address{Institut \'Elie Cartan Nancy (mathématiques)\\
Université Henri Poincaré Nancy 1\\
Campus Scientifique, BP 239 \\
54506 Vandoeuvre-lès-Nancy  Cedex France\\}
\email{Olivier.Garet@iecn.u-nancy.fr}
\author{Régine Marchand}
\email{Regine.Marchand@iecn.u-nancy.fr}

\address{Institut \'Elie Cartan Nancy (mathématiques)\\
Nancy-Université, CNRS,\\
Boulevard des Aiguillettes B.P. 239\\
F-54506 Vand\oe uvre-lès-Nancy,  France.}

\def\motsclefs{Percolation, chemical distance, moderate deviations, concentration inequalities, subadditivity.}

\subjclass[2000]{60K35, 82B43.} 
\keywords{\motsclefs}
% ici pour article 

\begin{abstract}
In this paper, we establish moderate deviations for the chemical distance in Bernoulli percolation. The chemical distance between two points is the length of the shortest open path between these two points. Thus, we study the size of random fluctuations around the mean value, and also the asymptotic behavior of this mean value. The estimates we obtain improve our knowledge of the convergence to the asymptotic shape. Our proofs rely on concentration inequalities proved by Boucheron, Lugosi and Massart, and also on the approximation theory of subadditive functions initiated by Alexander.
\end{abstract}
\thanks{Research supported by Agence Nationale de la Recherche (France),  ANR-08-BLAN-0190.}

\maketitle

\section{Introduction and results}

We consider supercritical Bernoulli percolation on the edges of $\Zd$, where $d\ge 2$ is a fixed integer. The set of edges of $\Zd$ is denoted by $\Ed$ and the set $\Omega=\{0,1\}^{\Ed}$ is endowed with the probability $\P=\mathcal{B}(p)^{\otimes \Ed}$: the coordinates $(\omega_e)_{e \in \Ed}$ are thus independent and identically distributed random variables following the Bernoulli law with parameter $p$. We denote by $p_c(\Zd)$  the critical point for Bernoulli percolation on $\Zd$, and in the following, $p>p_c(\Zd)$ is  fixed.

We denote by $C_{\infty}$ the infinite percolation cluster. The chemical distance between two points $x, y \in \Zd$ is the length of the shortest open path between these two points. Asymptotically, this chemical distance is equivalent to a deterministic norm $\mu$ -- 
see  Garet and Marchand~\cite{GM-fpppc}. Moreover, we even obtain large deviations inequalities in~\cite{GM-large}: 
\begin{equation*}
\label{GDmu}
\forall \epsilon>0 \quad \miniop{}{\limsup}{\Vert y\Vert_1\to +\infty} \frac{\log \P \left(0\communique   y, \; \frac{D(0,y)}{\mu(y)}\notin (1-\epsilon, 1+\epsilon)\right)}{\Vert y\Vert_1}<0.
\end{equation*}
Our main results here are the following moderate deviation estimates:
\begin{theorem}
\label{concentrationD}
There exists a constant $\C{concenD}>0$ such that
\begin{equation}
\forall y \in \Zd \quad \E \left( \left| D(0,y)-\mu(y)\right| \1_{\{0\communique   y\}} \right) \le \C{concenD}\sqrt{\|y\|_1}\log(1+\|y\|_1). \label{concenD}
\end{equation}
%\label{tfa}
There exist constants $\A{equmoderter},\B{equmoderter},\C{equmoderter}>0$  such that \\
$\forall y\in \Zd \backslash \{0\}$, $\forall x\in [\C{equmoderter} (1+\log \|y\|_1),\|y\|_1^{1/2}]$
\begin{eqnarray}
% && \forall x\in [\C{equmoderter} (1+\log \|y\|_1),\|y\|_1^{1/2}]  \nonumber \\
&& \quad  \quad \quad \P \left( 
\frac{| D(0,y)-\mu(y)|}{\sqrt{\|y\|_1}}> x , \;
 0 \communique y
\right) \le \A{equmoderter} e^{- \B{equmoderter} x}. \label{equmoderter}
\end{eqnarray}
There exists a constant $\C{equtfa}>0$ such that, $\P$-almost surely, on the event $\{0 \communique \infty\}$, for  $t$ large enough,
\begin{equation}
\mathcal{B}_{\mu}^0(t-\C{equtfa}\sqrt{t}\log t)\cap C_{\infty} \subset B^0(t)\subset  \mathcal{B}_{\mu}^0(t+\C{equtfa}\sqrt{t}\log t), \label{equtfa}
\end{equation}
where $B^0(t)=\{x \in\Zd: \;  D(0,x)\le t\}$ is the ball with radius $t$ for the chemical distance and  $\mathcal{B}_{\mu}^0(t)$ is the ball with radius $t$ for the norm $\mu$ .
\end{theorem}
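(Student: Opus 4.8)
The plan is to prove the three statements in order, each building on the previous one. For \eqref{concenD}, the strategy is to combine the concentration inequalities of Boucheron--Lugosi--Massart with Alexander's approximation theory for subadditive functions. The quantity $D(0,y)$ conditioned on $\{0\communique y\}$ is, up to the usual technical care needed because the event $\{0\communique y\}$ depends on the configuration, a function of the independent variables $(\omega_e)_{e\in\Ed}$ that changes by a bounded amount when a single edge is resampled, and whose "energy" (sum of squared conditional variations) is controlled by the length of a geodesic, hence of order $\|y\|_1$. This yields a concentration bound of the form $\P(|D(0,y)-\E D(0,y)|>x)\le C e^{-cx^2/\|y\|_1}$ for moderate $x$, and integrating gives $\E|D(0,y)-\E D(0,y)|\1_{\{0\communique y\}}=O(\sqrt{\|y\|_1})$. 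The remaining, and genuinely delicate, point is to pass from the random centering $\E D(0,y)$ to the deterministic norm $\mu(y)$: here one invokes Alexander's method, which shows that for a subadditive quantity with good fluctuation bounds, $\E D(0,y)-\mu(y)=O(\sqrt{\|y\|_1}\log\|y\|_1)$. Adding the two contributions gives \eqref{concenD}.

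For \eqref{equmoderter}, the upper-tail and lower-tail estimates are treated separately. The upper deviation $\P(D(0,y)-\mu(y)>x\sqrt{\|y\|_1},\,0\communique y)$ follows from the same concentration inequality (now used with the $e^{-\B{}x}$-type tail in the moderate regime $x\le\|y\|_1^{1/2}$) together with the bound $\E D(0,y)-\mu(y)\le \C{equmoderter}\sqrt{\|y\|_1}\log\|y\|_1\le \frac12 x\sqrt{\|y\|_1}$, which holds precisely because we assume $x\ge \C{equmoderter}(1+\log\|y\|_1)$; this is why that lower restriction on $x$ appears. The lower deviation $\P(D(0,y)<\mu(y)-x\sqrt{\|y\|_1},\,0\communique y)$ is where the restriction on the range of $x$ and the event $\{0\communique y\}$ interact most subtly: one needs to rule out that the geodesic is atypically short, and here I would again combine concentration around $\E D(0,y)$ with the deterministic lower bound $\E D(0,y)\ge\mu(y)-O(\sqrt{\|y\|_1}\log\|y\|_1)$ from Alexander's theory, so that the same exponential bound results.

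Finally, \eqref{equtfa} is obtained from \eqref{equmoderter} by a Borel--Cantelli argument along the infinite cluster. The inclusion $B^0(t)\subset\mathcal{B}_\mu^0(t+\C{equtfa}\sqrt t\log t)$ amounts to saying that no point $x$ with $D(0,x)\le t$ has $\mu(x)$ much larger than $t$; the reverse inclusion says that every point $x\in C_\infty$ with $\mu(x)\le t-\C{equtfa}\sqrt t\log t$ has $D(0,x)\le t$. In both cases one writes the bad event as a union over lattice points $x$ at distance of order $t$ from the origin (there are polynomially many in $t$ that matter, after using \eqref{GDmu} to discard points where $D(0,x)$ and $\mu(x)$ differ by a constant factor), applies \eqref{equmoderter} with $x$ of order $\sqrt t\log t/\sqrt{\|x\|_1}\asymp\log t$, which lies in the allowed range, and sums: the polynomial count is beaten by the $e^{-\B{equmoderter}x}=t^{-\B{equmoderter}\cdot(\text{const})}$ decay once $\C{equtfa}$ is chosen large enough. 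Then $\sum_t(\text{bad probability at scale }t)<\infty$ and Borel--Cantelli finishes the proof.

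The main obstacle I expect is the first step: setting up the concentration inequality cleanly in the presence of the conditioning event $\{0\communique y\}$ (equivalently, controlling $D(0,y)$ on the possibly-disconnected event via a well-chosen almost-surely-finite modification), and then combining it quantitatively with Alexander's approximation scheme to get the $\sqrt{\|y\|_1}\log\|y\|_1$ control on $\E D(0,y)-\mu(y)$. Everything after that is a fairly mechanical integration-of-tails and Borel--Cantelli exercise.
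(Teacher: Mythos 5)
Your high-level plan (concentration via Boucheron--Lugosi--Massart, plus Alexander's subadditive approximation to compare $\E D$ with $\mu$, plus a Borel--Cantelli step for the shape result) is the right one, and you correctly flagged the central obstacle: the event $\{0\communique y\}$ has probability bounded away from $1$, and on its complement $D(0,y)=+\infty$, so one cannot simply apply a bounded-differences concentration inequality to $D(0,y)$. However, the proposal underestimates what it takes to fix this, and gets one technical step wrong.

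The paper resolves the integrability problem in two stages, and the second is not in your sketch. First it replaces $D$ by $D^*(x,y)=D(x^*,y^*)$, where $x^*$ is the closest point of $x$ in the infinite cluster; this is always finite. But even $D^*$ is \emph{not} amenable to a BLM-type argument directly: resampling a single edge can change which vertex is $0^*$, and hence change $D^*(0,y)$ by an amount that is not deterministically bounded. This is why the paper introduces a second, mesoscopic approximation $D^t$ (adding ``red edges'' of length $Kt$ between pairs of points in the same box of side $t$), which is deterministically bounded, depends on only a deterministic finite set of boxes, and whose increments under a box-resampling are at most $Kt$. The BLM inequality is applied to $D^t$, and two comparison lemmas control $\|D^*-D^t\|_2$ and $\P(D^*\neq D^t)$. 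Your phrase ``a well-chosen almost-surely-finite modification'' points at $D^*$ but misses that a further renormalization is essential before any concentration inequality applies. Once Theorem~\ref{concentrationD*} is in hand, deriving Theorem~\ref{concentrationD} from it is short: split on $\{0\communique\infty\}$, use $D=D^*$ there, and bound $\P(0\communique y,\ 0\not\communique\infty)$ exponentially by~\eqref{amasfini}; your proposal does not make this reduction explicit but it is the actual content of the proof of the stated theorem.

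A second concrete gap: for \eqref{concenD} you propose to integrate the tail bound over $x$ to get $\E|D(0,y)-\E D(0,y)|=O(\sqrt{\|y\|_1})$. That cannot work here, because the moderate deviation estimate \eqref{equmoder} is only valid for $x$ in the window $[\C{equmoder}(1+\log\|y\|_1),\D{equmoder}\sqrt{\|y\|_1}]$; the small-$x$ regime is excluded (this is precisely the price of the mesoscopic renormalization), so the tail integral does not control the first moment. The paper instead proves a separate variance bound \eqref{equvar} via Efron--Stein, getting $\Var D^*(0,y)=O(\|y\|_1\log(1+\|y\|_1))$, and then uses $\E|D^*-\mu|\le|\E D^*-\mu|+\sqrt{\Var D^*}$. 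The resulting $\sqrt{\|y\|_1\log\|y\|_1}$ term is what feeds into \eqref{concenD}, not an $O(\sqrt{\|y\|_1})$ integral-of-tails. Your treatment of \eqref{equmoderter} and \eqref{equtfa} is essentially correct in outline; for \eqref{equtfa} the paper simply cites the standard Borel--Cantelli argument from Alexander.
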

In Kesten's work on first-passage percolation, the analogue of Inequality~(\ref{equmoderter}) is valid for $x \in[0, \|y\|_1]$.
Note however that for $x\in[\sqrt{\|y\|_1},\|y\|_1]$, the large deviation estimates are more accurate; on the other hand, we cannot extend our results for $x\in[0,\C{equmoderter} (1+\log \|y\|_1)]$ because of the approximation and renormalization process we use in the proof. 

Let us note first that the chemical distance between two points is infinite as soon as these points are not in the same open cluster: to overcome this problem, we introduce a variation of the chemical distance in the following way: for  $x\in \Zd$, we denote by $x^*$ the closest point to $x$ in  $C_\infty$ (for the $\|.\|_1$-distance). For indeterminate cases, we choose $x^*$ to minimize $x^*-x$ for a given deterministic rule, as the lexicographic order for instance. Then, we define:
$$\forall x,y \in \Zd \quad D^*(x,y)=D(x^*,y^*).$$
We will mainly work with $D^*$ and we will come back to the chemical distance $D$ by bounding the discrepancy between the two quantities. Here are the results we obtain for $D^*$:
\begin{theorem}
\label{concentrationD*}
There exists a constant  $\C{equvar}>0$ such that
\begin{equation}
\forall y \in \Zd \quad \Var D^*(0,y)\le \C{equvar}\|y\|_1\log(1+\|y\|_1). \label{equvar}
\end{equation}
For every $\D{equmoder}>0$, there exist constants $\A{equmoder},\B{equmoder},\C{equmoder}>0$ such that
for each $y \in\Zd \backslash \{0\}$,  for each $x\in [\C{equmoder} (1+\log \|y\|_1),\D{equmoder}\sqrt{\|y\|_1}]$,
\begin{equation}
\P \left( \frac{| D^*(0,y)-\E[D^*(0,y)] |}{\sqrt{\|y\|_1}}> x \right) \le \A{equmoder} e^{- \B{equmoder} x}. \label{equmoder}
\end{equation}
There exists a constant $\C{equetlesperance1}>0$  such that
\begin{equation}
\forall y \in \Zd\backslash\{0\} \quad 0  \le  \E[D^*(0,y)] -\mu(y)  \le  \C{equetlesperance1} \sqrt{\|y\|_1}\log (1+\|y\|_1). \label{equetlesperance1}
\end{equation}
%\label{moderbis}
The last two inequalities easily give some constants $\A{equmoderbis},\B{equmoderbis},\C{equmoderbis}>0$ such that for every $y \in \Zd \backslash \{0\}$,
\begin{equation}
\forall x\in [\C{equmoderbis} (1+\log \|y\|_1),\sqrt{\|y\|_1}]\quad  \P \left( \frac{| D^*(0,y)-\mu(y)|}{\sqrt{\|y\|_1}}> x \right) \le \A{equmoderbis} e^{- \B{equmoderbis} x}. \label{equmoderbis}
\end{equation}
\end{theorem}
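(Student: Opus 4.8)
\textbf{Proof of~(\ref{equmoderbis}).}
The plan is to deduce~(\ref{equmoderbis}) from~(\ref{equmoder}) and~(\ref{equetlesperance1}) by a triangle inequality, the only subtlety being to choose the new constants so that the admissible range of $x$ is preserved under the change of centering from $\E[D^*(0,y)]$ to $\mu(y)$.

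First I would apply~(\ref{equmoder}) with the particular value $\D{equmoder}=1$: this yields constants $\A{equmoder},\B{equmoder},\C{equmoder}>0$ such that, for every $y\in\Zd\setminus\{0\}$,
\[
\P \left( \frac{| D^*(0,y)-\E[D^*(0,y)] |}{\sqrt{\|y\|_1}}> u \right) \le \A{equmoder} e^{- \B{equmoder} u}
\qquad\text{for all } u\in[\C{equmoder} (1+\log \|y\|_1),\sqrt{\|y\|_1}].
\]
Then I set $\C{equmoderbis}:=2\max(\C{equmoder},\C{equetlesperance1})$, with $\C{equetlesperance1}$ the constant of~(\ref{equetlesperance1}). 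Fix $y\in\Zd\setminus\{0\}$ and $x\in[\C{equmoderbis}(1+\log\|y\|_1),\sqrt{\|y\|_1}]$. Since $\log(1+\|y\|_1)\le 1+\log\|y\|_1$ and $x\ge 2\C{equetlesperance1}(1+\log\|y\|_1)$, inequality~(\ref{equetlesperance1}) gives
\[
0\le \E[D^*(0,y)]-\mu(y)\le \C{equetlesperance1}(1+\log\|y\|_1)\sqrt{\|y\|_1}\le \frac{x}{2}\sqrt{\|y\|_1},
\]
so that the deterministic bias is at most half the deviation threshold, whence
\[
\left\{\frac{|D^*(0,y)-\mu(y)|}{\sqrt{\|y\|_1}}>x\right\}\subseteq\left\{\frac{|D^*(0,y)-\E[D^*(0,y)]|}{\sqrt{\|y\|_1}}>\frac{x}{2}\right\}.
\]

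Second, I would check that $u:=x/2$ lies in the range where the centered estimate above is valid: one has $x/2\ge \C{equmoder}(1+\log\|y\|_1)$ because $\C{equmoderbis}\ge 2\C{equmoder}$, and $x/2\le \tfrac12\sqrt{\|y\|_1}\le\sqrt{\|y\|_1}$. Applying~(\ref{equmoder}) with threshold $x/2$ then yields
\[
\P\left(\frac{|D^*(0,y)-\mu(y)|}{\sqrt{\|y\|_1}}>x\right)\le \A{equmoder}e^{-\B{equmoder}x/2},
\]
so the statement holds with $\A{equmoderbis}:=\A{equmoder}$, $\B{equmoderbis}:=\B{equmoder}/2$ and $\C{equmoderbis}$ as above. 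There is no genuine obstacle in this argument; the only thing requiring care is the bookkeeping on the constants, namely taking $\C{equmoderbis}$ large enough that simultaneously the bias is absorbed into half the deviation threshold and the halved threshold still exceeds $\C{equmoder}(1+\log\|y\|_1)$. \qed
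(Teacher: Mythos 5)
Your proof of~(\ref{equmoderbis}) is correct and is exactly the bookkeeping the paper has in mind when it writes ``the last two inequalities easily give'': you apply~(\ref{equmoder}) with $\D{equmoder}=1$, absorb the deterministic bias $\E[D^*(0,y)]-\mu(y)$ into half the threshold using~(\ref{equetlesperance1}), and verify that $x/2$ remains in the admissible window. The choice $\C{equmoderbis}=2\max(\C{equmoder},\C{equetlesperance1})$ handles both requirements simultaneously, and the resulting constants $\A{equmoderbis}=\A{equmoder}$, $\B{equmoderbis}=\B{equmoder}/2$ are correct.

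However, your write-up only addresses the last of the four inequalities in the theorem, and it is the only one the paper treats as immediate. The substance of Theorem~\ref{concentrationD*} is in the other three: the variance bound~(\ref{equvar}) and the moderate deviation estimate~(\ref{equmoder}) are proved in Section~3 via the Boucheron--Lugosi--Massart concentration inequalities applied to the renormalized distance $D^t$ together with Lemma~\ref{pastropdiff} controlling the discrepancy between $D^t$ and $D^*$; and the mean-value estimate~(\ref{equetlesperance1}) is proved in Section~4 using Alexander's approximation theory for subadditive functions (Proposition~\ref{propositionalex} and Lemma~\ref{bovary}). As a proof of the full theorem your proposal has a genuine gap: you take~(\ref{equvar}), (\ref{equmoder}) and~(\ref{equetlesperance1}) for granted, which is precisely where all the work lies.
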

As in first-passage percolation (see Kesten~\cite{kesten-modere}), the proof naturally falls into two parts:
\begin{itemize}
\item the control of the random fluctuations of $D^*(0,x)$ around its mean value (concentration property),
\item the control of the discrepancy between the mean value of $D^*(0,x)$ and  $\mu(x)$.
\end{itemize}

In his original work, Kesten used martingale technics. Such technics were also used by Howard and Newman~\cite{MR1452554,MR1849171} for Euclidean first-passage percolation (see also the survey by Howard~\cite{MR2023652}) and by Pimentel~\cite{pimentel-preprint} for the Vahidi-Asl and Wierner model. 
Since then,  the  concentration method developed by Talagrand~\cite{MR1361756} offered a new approach to this kind of problems and allowed to improve Kesten's estimates. In the same spirit, Benaïm and Rossignol~\cite{MR2451057} managed to enhance variance estimates in first-passage percolation. Here, we bound the fluctuations~(\ref{equvar}) and~(\ref{equmoder}) thanks to recent concentration inequalities of Boucheron, Lugosi and Massart~\cite{MR1989444}, which are easier to use than the abstract result of Talagrand. 

The control~(\ref{equetlesperance1}) of the discrepancy between $\E[D^*(0,y)]$ and~$\mu(y)$ usually relies on the moderate deviations estimates 
for the fluctuations of $D^*(0,x)$ around its mean value; it is particularly clear for the models with spherical symmetry such as Euclidean first-passage percolation~\cite{MR1452554,MR1849171}. A symmetry argument can also give simple proofs in the main direction, see Alexander~\cite{MR1202516}. Since this argument does not apply in an arbitrary direction, we choose to use the technics developed by Alexander~\cite{Alex97} for the approximation of subadditive functions.

The paper is organized as follows: in Section 2, we give some estimates for the chemical distance $D$ and its variation $D^*$, that are mainly derived from Antal and Pisztora's results~\cite{AP96}; we also deduce Theorem~\ref{concentrationD} for $D$ from Theorem~\ref{concentrationD*} for $D^*$. In Section 3, we prove Inequalities~(\ref{equvar}) and~(\ref{equmoder}) of Theorem~\ref{concentrationD*}, using concentration inequalities of Boucheron, Lugosi and Massart on one hand, and a mesoscopic renormalization argument on the other hand. Section~4 is devoted to the control of the discrepancy~(\ref{equetlesperance1}) between $\mu(x)$ and $\E[D^*(0,x)]$, following Alexander's method.

%%%%%%%%%%%%%%%%%%%%%%%%%%%%%%%%%%%%%%%%

\section{Some inequalities}

%%%%%%%%%%%%%%%%%%%%%%%%%%%%%%%%%%%%%%%%

\subsection{Classical estimates}

For each  $x$ in $\Zd$, we denote by $C(x)$ the percolation cluster 
of $x$; $|C(x)|$ is its cardinal.
Thanks to Chayes, Chayes, Grimmett, Kesten and Schonmann~\cite{CCGKS}, we
can control the radius of finite clusters: there exist 
constants $\A{amasfini},\B{amasfini}>0$ such that
\begin{equation}
\label{amasfini}
\forall r>0 \quad
 \P \left( |C(0)|<+\infty, \; C(0) \not\subset [-r,\dots,r]^d \right)
\le \A{amasfini}e^{-\B{amasfini} r}.
\end{equation}

We can also control the size of the holes in the infinite cluster: there exist
two strictly positive constants   $\A{amasinfini}$ et $\B{amasinfini}$ 
such that
\begin{equation}
\label{amasinfini}
\forall r>0 \quad
 \P \left( C_\infty \cap  [-r,\dots,r]^d=\varnothing \right)
\le \A{amasinfini}e^{-\B{amasinfini} r}.
\end{equation}
When $d=2$, this result follows from the large deviation estimates by Durrett
and Schonmann~\cite{DS}. Their methods can easily be transposed when $d\ge 3$.
Nevertheless, when $d\ge 3$, the easiest way to obtain it seems to use Grimmett
and Marstrand~\cite{Grimmett-Marstrand} slab's result.

\subsection{Estimates for the chemical distance}

The following lemma is a consequence of an auxiliary result obtained by  Antal and Pisztora~\cite{AP96}. This lemma actually contains the two theorems stated in their article.
\begin{lemme}
There exist positive constants $\alphaa{APexp},\betaa{APexp}$ such that
\begin{equation}
\label{APexp}
\forall y \in \Zd \quad \E[e^{\alphaa{APexp} \1_{\{0\communique y\}}D(0,y)}]\le e^{\betaa{APexp} \|y\|_1}.
\end{equation}
\end{lemme}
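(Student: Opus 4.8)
The plan is to deduce~(\ref{APexp}) from the uniform tail estimate that underlies Antal and Pisztora's two theorems, which I would use in the form: there exist $\rho\ge 1$ and $A,B>0$ such that
\[
\forall x\in\Zd,\ \forall t\ge \rho\|x\|_1:\qquad \P\bigl(0\communique x,\ D(0,x)>t\bigr)\le Ae^{-Bt}.
\]
This is the ``auxiliary result'' alluded to: in~\cite{AP96} one renormalizes the configuration into a dependent field of \emph{good} blocks on which the chemical distance is comparable to the $\ell^1$-distance, and a Peierls-type contour argument shows that an excess $D(0,x)>t\ge\rho\|x\|_1$ forces a connected cluster of order $t$ bad blocks, an event of probability at most $e^{-Bt}$. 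Specializing to $t=\rho\|x\|_1$ recovers their first theorem, and their shape statement follows by a Borel--Cantelli argument; the case where $0$ and $x$ lie in a common \emph{finite} cluster is harmless, since then $D(0,x)<|C(0)|$, whose tail is exponential in view of~(\ref{amasfini}).

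First I would pass from this tail bound to the exponential moment by the layer-cake formula. Setting $\widetilde D(0,y):=\1_{\{0\communique y\}}D(0,y)$, with the convention that the product is $0$ when $0\not\communique y$, one has $\widetilde D(0,y)\ge 0$ and
\[
\E\bigl[e^{\alpha\widetilde D(0,y)}\bigr]=1+\alpha\int_0^{\infty}e^{\alpha t}\,\P\bigl(0\communique y,\ D(0,y)>t\bigr)\,dt .
\]
I would split the integral at $t=\rho\|y\|_1$: on $[0,\rho\|y\|_1]$ bound the probability crudely by $1$, contributing at most $e^{\alpha\rho\|y\|_1}$; on $[\rho\|y\|_1,\infty)$ insert the bound $Ae^{-Bt}$, which, having fixed once and for all some $\alpha\in(0,B)$, is integrable and contributes at most $\alpha A\int_0^{\infty}e^{-(B-\alpha)t}\,dt=\alpha A/(B-\alpha)=:C_\alpha$. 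Hence $\E[e^{\alpha\widetilde D(0,y)}]\le (1+C_\alpha)\,e^{\alpha\rho\|y\|_1}$ for every $y\in\Zd$.

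It then remains to absorb the constant into the exponent. For $y=0$ the left-hand side equals $1$ and there is nothing to prove; for $\|y\|_1\ge 1$, using $1+C_\alpha\ge 1$ we get $(1+C_\alpha)e^{\alpha\rho\|y\|_1}\le e^{(\alpha\rho+\log(1+C_\alpha))\|y\|_1}$, so~(\ref{APexp}) holds with $\alphaa{APexp}:=\alpha$ and $\betaa{APexp}:=\alpha\rho+\log(1+C_\alpha)$. I do not expect a genuine obstacle: the only substantial ingredient is the Antal--Pisztora estimate, and the single point requiring care is to quote it in the form above (exponential decay in $t$, uniformly over all $t\ge\rho\|x\|_1$, rather than merely at the value $t=\rho\|x\|_1$), after which the passage to the exponential moment is entirely elementary.
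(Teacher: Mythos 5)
Your argument is correct, but it reverses the logical order used in the paper, and that reversal is worth being explicit about. The paper quotes Antal--Pisztora's internal inequality~(4.49), which yields a stochastic domination $\1_{\{0\communique y\}}D(0,y)\preceq \mu^{*\|y\|_1}$ by a sum of $\|y\|_1$ i.i.d.\ random variables with exponential moments; from this the exponential moment bound~(\ref{APexp}) is immediate (just raise the one-variable Laplace transform to the power $\|y\|_1$), and the uniform tail estimate~(\ref{gecart}) then drops out as a \emph{corollary} by Markov's inequality. You instead take the uniform tail estimate as the input and recover the exponential moment via the layer-cake formula. That step is carried out correctly (the split at $t=\rho\|y\|_1$, the condition $\alpha<B$, and the absorption of the multiplicative constant into the exponent for $\|y\|_1\ge 1$ are all fine). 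The only thing you must be careful about --- and you flag it yourself --- is that the uniform tail bound $\P(0\communique x,\,D(0,x)>t)\le Ae^{-Bt}$ for \emph{all} $t\ge\rho\|x\|_1$ is not one of the two theorems Antal--Pisztora state; it comes from the renormalization argument inside their proof, which is precisely what~(4.49) encapsulates, so one must cite that auxiliary inequality (or reprove the block estimate) rather than their headline results. Given that, both routes rest on the same ingredient; the paper's stochastic-domination phrasing is a bit more economical since it bypasses the integration entirely, while yours is closer to the ``tail bound $\Rightarrow$ moment generating function'' reflex and requires fixing $\alpha<B$ once and for all. One minor inaccuracy in your preamble: you invoke~(\ref{amasfini}) to say the tail of $|C(0)|$ on the finite-cluster event is exponential, but~(\ref{amasfini}) controls the \emph{radius}, so what it gives for the \emph{cardinality} is only stretched-exponential decay of order $e^{-cm^{1/d}}$; this is immaterial to your main argument, since the uniform tail bound on $D(0,x)$ already absorbs the finite-cluster case, but the remark as written overstates what~(\ref{amasfini}) provides.
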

\begin{proof}
Inequality $(4.49)$ in Antal--Pisztora~\cite{AP96} says that there exist an integer  $N$ and a real $c>0$ such that
\begin{equation*}
\label{apcache}
\forall \ell \ge 0\quad \P(0\communique y, D(0,y)>\ell)\le \P \left(\sum_{i=0}^n (|C_{i}|+1)>\ell c N^{-d}\right),
\end{equation*}
where the $C_{i}$'s are independent identically distributed random sets, such that there exists $h>0$ with $\E[\exp(h |C_{i}|)]<+\infty$, and $n$ is an integer
with $n\le \|y\|_1/2N$, which leads to $n+1\le \|y\|_1$.
Thus, for each $\ell>0$,
\begin{eqnarray*}
\label{apcachezz}
\P(0\communique y, D(0,y)\ge \ell) & \le  & \P(0\communique y, D(0,y)>\ell /2)\\
& \le & \P \left( \sum_{i=0}^n (|C_{i}|+1)>\frac \ell 2c N^{-d} \right)\\
& \le & \P \left( \sum_{i=1}^{\|y\|_1} \frac{2N^d}c (|C_{i}|+1)\ge \ell \right).
\end{eqnarray*}
Of course, the last inequality remains true for $\ell\le 0$, which proves that
$$\1_{\{0\communique y\}}D(0,y)\preceq \mu^{*\|y\|_1},$$
where $\mu$ is the distribution of 
$\frac{2N^d}c (|C_{0}|+1)$ and $\preceq$ denotes the stochastic domination.
Thus, we can choose  $\alphaa{APexp}=h\frac{c}{2N^d}$ and $\betaa{APexp}=\log \E [\exp(h |C_{0}|+1)]$.
\end{proof}

\begin{coro}
\label{danslecube}
There exist some  constants $\rho,\A{gecart},\B{gecart}>0$ such that
\begin{eqnarray}
\forall y\in\Zd \quad  \forall t\ge\rho \|y\|_1\quad  \P(0\communique y, \; D(0,y)>t)& \le &  \A{gecart}\exp(-\B{gecart} t). \label{gecart} 
% \\
% \forall r>0 \quad \P
% \left(
% \begin{array}{c}
% \exists x,y\in \{0,\dots,r\}^d: \\
%  \1_{\{x\communique y\}}D(x,y)>\rho r
% \end{array}
% \right)& \le & \A{cube}\exp(-\B{cube} r).\label{cube}
\end{eqnarray}
\end{coro}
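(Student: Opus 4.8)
The statement of Corollary~\ref{danslecube} is a routine consequence of the exponential moment bound~(\ref{APexp}) from the preceding lemma, obtained by the classical Markov/Chernoff argument. The only subtlety is to produce the \emph{linear} threshold $t\ge\rho\|y\|_1$ rather than a threshold of the form $t\ge \text{const}\cdot\|y\|_1$ with an implicit large constant, and to check that the rate $\B{gecart}$ can be taken uniform in $y$.

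First I would apply the exponential Markov inequality: for every $y\in\Zd$ and every $t>0$,
\begin{equation*}
\P(0\communique y,\; D(0,y)>t)=\P\bigl(e^{\alphaa{APexp}\1_{\{0\communique y\}}D(0,y)}>e^{\alphaa{APexp} t}\bigr)\le e^{-\alphaa{APexp} t}\,\E\bigl[e^{\alphaa{APexp}\1_{\{0\communique y\}}D(0,y)}\bigr]\le e^{-\alphaa{APexp} t+\betaa{APexp}\|y\|_1},
\end{equation*}
using~(\ref{APexp}) for the last step. Now I would choose $\rho=2\betaa{APexp}/\alphaa{APexp}$ (or any $\rho>\betaa{APexp}/\alphaa{APexp}$): for $t\ge\rho\|y\|_1$ one has $\betaa{APexp}\|y\|_1\le \tfrac{\alphaa{APexp}}{2}t$, hence
\begin{equation*}
\P(0\communique y,\; D(0,y)>t)\le e^{-\alphaa{APexp} t+\frac{\alphaa{APexp}}{2}t}=e^{-\frac{\alphaa{APexp}}{2}t},
\end{equation*}
so the conclusion holds with $\A{gecart}=1$ and $\B{gecart}=\alphaa{APexp}/2$. (The case $y=0$ is trivial since $D(0,0)=0$.)

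There is essentially no obstacle here: the work has already been done in the lemma, which packages the Antal--Pisztora estimate into a clean exponential moment bound that is \emph{uniform} in $y$ (the constants $\alphaa{APexp},\betaa{APexp}$ do not depend on $y$), and the corollary is just the Chernoff tail extracted from it on the regime where the linear term $-\alphaa{APexp} t$ dominates the deviation term $\betaa{APexp}\|y\|_1$. If one wanted a statement valid for all $t\ge 0$ (not just $t\ge\rho\|y\|_1$) one would only get $\A{gecart}e^{-\B{gecart} t}$ with $\A{gecart}=e^{\betaa{APexp}\|y\|_1}$, which is useless for small $t$; restricting to $t\ge\rho\|y\|_1$ is exactly what absorbs that prefactor into the exponential, and this restriction is harmless for the later applications (where $t$ is comparable to $\mu(y)$, itself linear in $\|y\|_1$).
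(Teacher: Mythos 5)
Your proof is correct and is essentially identical to the paper's: both apply the exponential Markov inequality to~(\ref{APexp}) and then choose $\rho$ large enough that the $\betaa{APexp}\|y\|_1$ term is absorbed into half of $\alphaa{APexp}t$. (The paper takes $\rho>2\betaa{APexp}/\alphaa{APexp}$ rather than the minimal $\rho>\betaa{APexp}/\alphaa{APexp}$, but that extra margin is a choice made for later use, not a necessity for this corollary.)
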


\begin{proof}
Let $y \in \Zd$ and $t>0$. With the Markov inequality and (\ref{APexp}), we get
$$
\P(0\communique y, \; D(0,y)>t)  \le e^{-\alphaa{APexp}t}\E[e^{\alphaa{APexp} \1_{ \{0\communique y \} }D(0,y)}] \le e^{\betaa{APexp}\|y\|_1-\alphaa{APexp}t}.
$$
To get (\ref{gecart}), we take  $\rho>2\frac{\betaa{APexp}}{\alphaa{APexp}}$.
% 
% Let $x,y$ in $\{0,\dots,r\}^d$, and $\rho$ as previously. 
% With the Markov inequality and the previous lemma,
% \begin{eqnarray*}
% & & \P(\exists x,y\in \{0,\dots,r\}^d: \; \1_{\{x\communique y\}}D(x,y)>\rho r) \\
% & \le & \sum_{x,y \in \{0,\dots,r\}^d} \P( \1_{\{x\communique y\}}D(x,y)>\rho r ) \\
% & \le & \sum_{x,y \in \{0,\dots,r\}^d} \exp(-\alphaa{APexp} \rho r) \E[\exp(\alphaa{APexp} \1_{\{x\communique y\}}D(x,y))] \\
% & \le & \sum_{x,y \in \{0,\dots,r\}^d} \exp(-\alphaa{APexp} \rho r) \exp(\betaa{APexp} \|y-x\|_1) 
%  \le  (1+r)^{2d} \exp(-\betaa{APexp} r),
% \end{eqnarray*}
% which concludes the proof of (\ref{cube}).
% 
% We can note that~(\ref{cube}) can also be found in Dembo, Gandolfi and Kesten~\cite{DGK}, Lemma 2.14, as a by-product of the Antal--Pisztora proof.
\end{proof}

\subsection{Estimates for the modified chemical distance $D^*$}

We now show some estimates for $D^*$ that are analogous to the ones that we have obtained for the chemical distance $D$:
\begin{lemme}
\label{apstar}
There exist some constants $\rho^*, \A{apstare},\B{apstare},\alphaa{controlemomexp}, \betaa{controlemomexp},\A{controlnormedeux},\B{controlnormedeux}>0$ such that, for each $y\in\Zd$:

\begin{eqnarray}
\forall t\ge \rho^*\|y\|_1\quad \P(D^*(0,y)>t)&\le& \A{apstare}\exp(-\B{apstare} t), \label{apstare}\\ \E[e^{\alphaa{controlemomexp}D^*(0,y)}]& \le & e^{\betaa{controlemomexp}\|y\|_1}, \label{controlemomexp} \\
%\E [\1_{\{D^*(0,y)>\rho_2\|y\|\}} \exp(\frac{\B{apstare}}2(D^*(0,y)-\rho_2\|y\|))]&\le & \A{apstare}\exp(-\B{apstare}\rho_2\|y\|)  \nonumber\\
\label{controlnormedeux}
\|(D^*(0,y)-\rho^*\|y\|_1)^+\|_2& \le &\A{controlnormedeux}\exp(-\B{controlnormedeux}\|y\|_1).
%\label{controlesperance}
%\E[D^*(0,y)]& \le & \rho^*\|y\|_1+\A{controlnormedeux}.
\end{eqnarray}
\end{lemme}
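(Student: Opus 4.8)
The plan is to prove~(\ref{apstare}) first, and then deduce~(\ref{controlemomexp}) and~(\ref{controlnormedeux}) from it by elementary integration, in the same way Corollary~\ref{danslecube} and~(\ref{APexp}) were related. The starting observation is that $D^*(0,y)=D(0^*,y^*)$, and that $0^*,y^*$ always belong to $C_\infty$, hence $0^*\communique y^*$; the whole difficulty is thus to transfer the exponential tail estimate of Corollary~\ref{danslecube} to the \emph{random} endpoints $0^*$ and $y^*$.

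For~(\ref{apstare}) I would set $K=\|0^*\|_1+\|y^*-y\|_1$, so that $\|0^*-y^*\|_1\le\|y\|_1+K$, and split according to whether $K\le\|y\|_1$ or not. On $\{K>\|y\|_1\}$, one of the points $0^*,y^*$ lies at $\ell^1$-distance more than $\|y\|_1/2$ from $0$ or from $y$; since $\|x-x^*\|_1\ge r$ forces $C_\infty$ to avoid a cube of side of order $r/d$ around $x$, translation invariance and~(\ref{amasinfini}) give $\P(K>\|y\|_1)\le Ce^{-c\|y\|_1}\le Ce^{-(c/\rho^*)t}$ whenever $t\ge\rho^*\|y\|_1$. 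On $\{K\le\|y\|_1\}$ we have $\|0^*-y^*\|_1\le 2\|y\|_1$, and I would pass to a union bound over the possible values $u$ of $0^*$ and $v$ of $y^*$. Using only that $\{0^*=u\}\subset\{u\in C_\infty\}$ and $\{y^*=v\}\subset\{v\in C_\infty\}$, and dropping these constraints,
\[
\P\big(D^*(0,y)>t,\ K\le\|y\|_1\big)\ \le\!\!\!\sum_{\|u\|_1+\|v-y\|_1\le\|y\|_1}\!\!\!\P\big(u\communique v,\ D(u,v)>t\big),
\]
and each summand is at most $\A{gecart}e^{-\B{gecart}t}$ by Corollary~\ref{danslecube} as soon as $t\ge\rho\,\|u-v\|_1$, which holds for every pair in the sum provided $\rho^*\ge 2\rho$ (recall $t\ge\rho^*\|y\|_1\ge2\rho\|y\|_1\ge\rho\|u-v\|_1$). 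The number of pairs is at most $(2\|y\|_1+1)^{2d}\le(1+2t/\rho^*)^{2d}$, a polynomial in $t$ which gets absorbed by slightly shrinking the exponential rate; adding the two contributions and adjusting constants yields~(\ref{apstare}).

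From~(\ref{apstare}) I would get~(\ref{controlemomexp}) by writing $\E[e^{\alpha D^*(0,y)}]=1+\alpha\int_0^\infty e^{\alpha s}\P(D^*(0,y)>s)\,ds$, cutting the integral at $\rho^*\|y\|_1$: the part below contributes at most $e^{\alpha\rho^*\|y\|_1}$, and the part above converges and is bounded by a constant once $\alphaa{controlemomexp}<\B{apstare}$, so a large enough $\betaa{controlemomexp}$ works (the case $y=0$ being trivial since $D^*(0,0)=0$). Likewise, applying~(\ref{apstare}) at level $\rho^*\|y\|_1+s\ge\rho^*\|y\|_1$,
\[
\E\big[\big((D^*(0,y)-\rho^*\|y\|_1)^+\big)^2\big]=\int_0^\infty 2s\,\P\big(D^*(0,y)>\rho^*\|y\|_1+s\big)\,ds\ \le\ \A{apstare}e^{-\B{apstare}\rho^*\|y\|_1}\!\int_0^\infty 2s\,e^{-\B{apstare}s}\,ds,
\]
and taking square roots gives~(\ref{controlnormedeux}) with, say, $\B{controlnormedeux}=\B{apstare}\rho^*/2$.

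The one genuinely delicate point is the dependence between the location of $x^*$, which is a function of the configuration near $x$, and the chemical distance $D(u,v)$ between prospective values — these events are not independent, so one cannot simply condition. This is exactly what the union bound above circumvents: one discards the events $\{0^*=u\}$ and $\{y^*=v\}$ and keeps only the connectivity $u\communique v$ they force, after which Corollary~\ref{danslecube} applies to deterministic endpoints. Everything else (translation invariance of~(\ref{amasinfini}), the two integrations by parts, and the absorption of polynomial prefactors into exponentials) is routine.
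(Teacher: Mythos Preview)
Your overall strategy is the same as the paper's --- union bound over the possible positions of $0^*$ and $y^*$, control of the far-away case by~(\ref{amasinfini}), and Corollary~\ref{danslecube} for the chemical distance between deterministic endpoints; the derivations of~(\ref{controlemomexp}) and~(\ref{controlnormedeux}) by integration are identical to the paper's. There is, however, a genuine error in your treatment of~(\ref{apstare}).

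You split on the event $\{K\le\|y\|_1\}$ with $K=\|0^*\|_1+\|y^*-y\|_1$, and then claim that $\P(K>\|y\|_1)\le Ce^{-c\|y\|_1}\le Ce^{-(c/\rho^*)t}$ whenever $t\ge\rho^*\|y\|_1$. This inequality is backwards: $t\ge\rho^*\|y\|_1$ gives $\|y\|_1\le t/\rho^*$, hence $e^{-c\|y\|_1}\ge e^{-(c/\rho^*)t}$. Your bound $Ce^{-c\|y\|_1}$ is therefore a constant in $t$ and does \emph{not} yield the decay $\A{apstare}e^{-\B{apstare}t}$ required by~(\ref{apstare}) for $t$ much larger than $\rho^*\|y\|_1$. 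Concretely, the event $\{D^*(0,y)>t\}$ can occur because $0^*$ sits at distance of order $t$ from $0$; your split at the fixed level $\|y\|_1$ lumps all such configurations into the ``bad'' piece and then bounds that piece only by $e^{-c\|y\|_1}$.

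The fix is exactly what the paper does: split at a threshold proportional to $t$ rather than to $\|y\|_1$. With $\lambda=1/(4\rho)$ and $\rho^*=2\rho$, one bounds $\P(\|0^*\|_1\ge\lambda t)$ and $\P(\|y^*-y\|_1\ge\lambda t)$ directly by~(\ref{amasinfini}), giving a genuine $e^{-c\lambda t}$; on the complement, $\|a-b\|_1\le\|y\|_1+2\lambda t\le t/\rho$ so~(\ref{gecart}) applies to every pair in the sum. The number of pairs is then polynomial in $t$ (not in $\|y\|_1$), and gets absorbed as you describe. Everything else in your proposal is correct and matches the paper.
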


\begin{proof}
Let $\lambda=\frac1{4\rho}$ and $\rho^*=2\rho$. Since $D^*(0,y)=D(0^*,y^*)$, we have 
\begin{eqnarray*}
\P(D^*(0,y)>t)& \le & \P(\|0^*\|_1 \ge \lambda t)+ \P( \|y^*-y\|_1 \ge \lambda t)\\
& &+\miniop{}{\sum}{\substack{ \|a\|_1\le \lambda t \\ \|b-y\|_1\le \lambda t}}  \P(\1_{\{a\communique b \}}D(a,b)>t).
\end{eqnarray*}
The first and  second terms are controlled with~(\ref{amasinfini}). If $t\ge \rho^* \|y\|_1$, then for each term in the sum, we have: 
$$
\|a-b\|_1\le \|y\|_1+2\lambda t\le  \frac{t}{2\rho} +2\lambda t=\frac{t}{\rho},
$$ 
which permits to use the bound in~(\ref{gecart}), and hence completes the proof of~(\ref{apstare}). 

Now, for $y\in \Zd\backslash\{0\}$, it follows from~(\ref{apstare}) that
\begin{eqnarray*}
\E [e^{\alpha D^*(0,y)}] & = & 1+\int_0^{+\infty} \P(D^*(0,y)>t)\alpha e^{\alpha t}dt \\
& \le & e^{\alpha \rho^* \|y\|_1} +\int_{\rho^* \|y\|_1}^{+\infty} \A{apstare}\exp(-\B{apstare} t)\alpha e^{\alpha t}dt,
\end{eqnarray*}
which gives~(\ref{controlemomexp}), taking for instance $\alphaa{controlemomexp}=\B{apstare}/2$.
Similarly, using~(\ref{apstare}) again,
\begin{eqnarray*}
\E([(D^*(0,y)-\rho^*\|y\|_1)^+]^2)  & = &  \int_0^{+\infty} \P(D^*(0,y)-\rho^*\|y\|_1>t) 2t \ dt \\
& \le &  \A{apstare} \exp(-\B{apstare}\|y\|_1 )\int_0^{+\infty} 2t\exp(-\B{apstare} t) \ dt,
\end{eqnarray*}
which gives~(\ref{controlnormedeux}).
\end{proof}

\subsection{Proof of theorem~\ref{concentrationD}}

Let us see how Theorem~\ref{concentrationD} follows from Theorem~\ref{concentrationD*}.
The estimate~(\ref{amasfini}) about the radius of large finite clusters makes us able to prove that on the event $\{0 \communique y\}$, the quantities $D(0,y)$ and $D^*(0,y)$ coincide with a huge probability: indeed, on the event $\{0\communique y, \; 0\communique \infty\}$, the identity $D(0,y)=D^*(0,y)$ holds, and we can bound the probability
$$\P(0\communique y, \; 0 \not\communique \infty) \le \A{amasfini}e^{-\B{amasfini} \|y\|_1}.$$

\begin{proof}[Proof of Theorem~\ref{concentrationD}]
We begin with the proof of~(\ref{concenD}). Let $y \in \Zd$.
\begin{eqnarray*}
& & \E (|D(0,y)-\mu(y)|\1_{\{0\communique y\}}) \\
 & = & 
\E (|D(0,y)-\mu(y)| \1_{\{0\communique y,\;0\communique\infty\}})
+ \E (|D(0,y)-\mu(y)|\1_{\{0\communique y,\;0\not\communique\infty\}}).
\end{eqnarray*}
On one hand, with the Cauchy--Schwarz inequality and the estimates~(\ref{gecart}) and~(\ref{amasfini}), we have
\begin{eqnarray*}
 && \E ( | D(0,y)-\mu(y) | \1_{\{0\communique y,0\not\communique\infty\}}) \\
&\le & \| (D(0,y)-\mu(y)) \1_{\{0\communique y\}} \|_2 \sqrt{\P(0\communique y,0\not\communique\infty)}\\
& \le &(\|D(0,y) \1_{\{0\communique y\}} \|_2+\mu(y)) \sqrt{\P(0\communique y,0\not\communique\infty)}\\
& =  & o \left( \sqrt{\|y\|_1}\log(1+\|y\|_1) \right).
\end{eqnarray*}
On the other hand, with the inequalities~(\ref{equetlesperance1}) and~(\ref{equvar}) in Theorem~\ref{concentrationD*},
\begin{eqnarray*}& & \E( | D(0,y)-\mu(y) | \1_{\{0\communique y,0\communique\infty\}})\\ & = &  \E( |D^*(0,y)-\mu(y)|\1_{\{0\communique y,0\communique\infty\}})\\
& \le & \E( | D^*(0,y)-\mu(y)| ) \\
& \le & |\E [ D^*(0,y)] -\mu(y)|+\sqrt{\Var(D^*(0,y))}\\
& \le & \C{equetlesperance1}\|y\|_1^{1/2}\log(1+\|y\|_1)+(\C{equvar}\|y\|_1\log(1+\|y\|_1))^{1/2},
\end{eqnarray*}
which proves~(\ref{concenD}).

Now we turn to the proof of~(\ref{equmoderter}). Let $y \in \Zd$. \\
For each $x\in [\C{equmoderbis} (1+\log \|y\|_1),\sqrt{\|y\|_1}]$, Inequality~(\ref{equmoderbis}) in Theorem~\ref{concentrationD*} and the estimate~(\ref{amasfini}) ensure  that
\begin{eqnarray*}
 && \P \left( \frac{| D(0,y)-\mu(y)|}{\sqrt{\|y\|_1}}> x , \; 0 \communique y\right) \\
 & \le & \P \left( \frac{| D^*(0,y)-\mu(y)|}{\sqrt{\|y\|_1}}> x \right) +\P(0 \communique y, \; 0 \not\communique +\infty) \\
& \le & \A{equmoderbis} e^{- \B{equmoderbis} x} +  \A{amasfini}e^{-\B{amasfini} \|y\|_1}.
\end{eqnarray*}
Since $x \le \sqrt{\|y\|_1} \le \|y\|_1$, this proves~(\ref{equmoderter}).

The proof of~(\ref{equtfa}) is standard from~(\ref{equmoderter}) -- see for instance the proof of Theorem 3.1 in Alexander~\cite{Alex97}.
\end{proof}

\section{Moderate deviations}

We prove here the concentration results~(\ref{equvar}) and~(\ref{equmoder}) for  $D^*$. 
The proof is of course based on Kesten's one~\cite{kesten-modere}; however, to overcome the lack of integrability of the chemical distance, we  use an approximation and renormalization process: for a real $t>0$ and any $k \in \Zd$, we denote by $\Lambda_k^t$ the set of edges whose centers are closer from $tk$ than from any other point of the grid $t\Zd$ (in equality cases, we use an arbitrary deterministic rule to associate the edge to a unique box). We say that a point in $\Zd$ is in the box $\Lambda_k^t$ if it is the extremity of an edge in this box: thus, the boxes $(\Lambda_k^t)_{k \in \Zd}$ partition the set of edges $\Ed$, but not the set of points $\Zd$.

We denote by $D^t(a,b)$ the distance obtained from the chemical distance as follows: if two points $x$ and $y$ are in the same box $\Lambda_k^t$, we add an extra red edge between them with length $Kt$, where $K$ is a constant such that $K>4\rho$.
For $t$ large enough, we will check with Lemma~\ref{pastropdiff} that $D^*(0,y)$ and $D^t(0^*,y^*)$ are really close.

Since $D^t(0,y)$ can by deterministically bounded, we can localize optimal paths in a deterministic box, which was not the case with $D^*$. Thus we can use a concentration result of Boucheron, Lugosi and  Massart~\cite{MR1989444} to bound the fluctuations of $D^t(0,y)$ around $\E[D^t(0,y)]$:
\begin{lemme}
\label{devDt}
For every $\C{devDgammax}>0$, there exist constants $\B{devDgammax},\gamma>0$ such that for every $y \in \Zd \backslash \{0\}$, %pour tout $t\ge 1$, 
\begin{equation}
\forall x \le  \C{devDgammax}\|y\|_1^{1/2} \quad  \P \left( \frac{|D^{\gamma x}(0,y)-\E [D^{\gamma x}(0,y)]|}{\sqrt{\|y\|_1}} \ge x \right)  \le  2\exp(- \B{devDgammax}x). \label{devDgammax}
\end{equation}
\end{lemme}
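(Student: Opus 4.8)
The plan is to apply a variance/concentration inequality of Boucheron, Lugosi and Massart to the function $D^{\gamma x}(0,y)$, viewed as a function of the finitely many edge-variables that can possibly influence it. The key point that makes this possible, and that was not available for $D^*$, is that $D^{t}(0,y)$ is deterministically bounded: since $0$ and $y$ are connected by a path of red edges through a chain of at most $O(\|y\|_1/t)$ boxes, each contributing length $Kt$, we get $D^{t}(0,y)\le C\|y\|_1$ for a universal constant $C$. Consequently any geodesic for $D^{t}$ between $0$ and $y$ stays within a box of side $O(\|y\|_1)$, so $D^{t}(0,y)$ depends only on the edge-variables $(\omega_e)$ with $e$ in a deterministic finite set $S$ of cardinality $O(\|y\|_1^d)$. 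This localization is the crucial structural input.

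With $D^{t}(0,y)=f\big((\omega_e)_{e\in S}\big)$ in hand, I would bound the associated "energy'' quantity that appears in the BLM inequality. For each edge $e$, let $\omega^{(e)}$ denote the configuration with $\omega_e$ replaced by an independent copy (or by the worst value); then $f(\omega)-f(\omega^{(e)})$ is nonzero only if $e$ lies on a geodesic, and when it is nonzero its absolute value is at most $1$ (resampling one edge changes the distance by at most $1$, the edge being open or closed). Hence $\sum_{e\in S}\big(f(\omega)-f(\omega^{(e)})\big)_+^2$ is bounded by the number of edges on a geodesic realizing $D^{t}(0,y)$, which is at most $D^{t}(0,y)$ itself (a geodesic has at most $D^{t}(0,y)+1$ edges, red edges counting for length $Kt\gg 1$). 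This is exactly the self-bounding / $(a,b)$-type structure required by the concentration inequalities in~\cite{MR1989444}: the "gradient sum'' is controlled by the function itself, up to a factor. Feeding this into their exponential inequality yields, for $u>0$,
\begin{equation*}
\P\big(|D^{t}(0,y)-\E[D^{t}(0,y)]|\ge u\big)\le 2\exp\!\left(-\frac{cu^{2}}{\E[D^{t}(0,y)]+u}\right).
\end{equation*}
Since $\E[D^{t}(0,y)]\le C\|y\|_1$, taking $u=x\sqrt{\|y\|_1}$ gives an exponent of order $x^{2}\|y\|_1/(\|y\|_1+x\sqrt{\|y\|_1})$; in the regime $x\le \C{devDgammax}\sqrt{\|y\|_1}$ the denominator is $O(\|y\|_1)$, so the exponent is at least of order $x^{2}$, hence certainly at least $\B{devDgammax}x$ once $x\ge 1$ (and for $x$ bounded the statement is trivial after adjusting constants). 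The scaling $t=\gamma x$ does not affect any of these bounds qualitatively, since $C$ and the deterministic diameter bound $C\|y\|_1$ are uniform in $t$ (they only use $K>4\rho$ fixed); the parameter $\gamma$ will be pinned down later, in Lemma~\ref{pastropdiff}, to make $D^{\gamma x}(0,y)$ close to $D^{*}(0,y)$, so here it is a free positive constant.

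The main obstacle I anticipate is not the probabilistic inequality itself but the careful bookkeeping needed to cast $D^{t}(0,y)$ into the precise hypotheses of the Boucheron--Lugosi--Massart theorem: one must identify the correct functional (likely a "$(a,b)$-self-bounding'' or Efron--Stein-type bound), verify that resampling an edge changes $D^{t}$ by at most $1$ while the sum of squared positive increments is dominated by the length of a single geodesic, and handle the fact that the relevant geodesic depends on $\omega$ (so one should use a measurable selection of geodesics and the standard argument that the increment is controlled by whether $e$ lies on the resampled-configuration geodesic). A secondary technical point is making the edge set $S$ genuinely deterministic and finite — this follows from the deterministic upper bound $D^{t}(0,y)\le C\|y\|_1$, which confines every geodesic to an explicit box, but it must be stated cleanly so that the concentration inequality applies to a function of finitely many i.i.d.\ coordinates.
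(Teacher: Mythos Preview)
Your proposal has a genuine gap at the central estimate. You claim that resampling a single edge $e$ changes $D^{t}(0,y)$ by at most $1$. This is false: in the graph underlying $D^t$, an open edge has length $1$ but a closed edge is simply absent, so closing an edge $e=\{u,v\}$ lying on the geodesic forces a detour. The cheapest detour is the red edge between $u$ and $v$ (both lie in the box $\Lambda^t_k$ to which $e$ is assigned), and that edge has length $Kt$. Hence $|D^t(\omega)-D^t(\omega^{(e)})|\le Kt-1$, not $1$. With this correct increment, the edge-by-edge Efron--Stein sum is of order $(Kt)^2\,D^t(0,y)$; feeding this into the Boucheron--Lugosi--Massart inequality yields an exponent of order $x^{2}\|y\|_1/(t^{2}\|y\|_1)=x^{2}/t^{2}$, and with $t=\gamma x$ this collapses to a useless constant.

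This is precisely why the paper groups the edge variables into the mesoscopic boxes $(\Lambda^t_k)$ rather than treating them individually: resampling an entire box still perturbs $D^t$ by at most $Kt$ (a single red edge bridges the first and last visits of the geodesic to the box), but the geodesic meets only $O(D^t(0,y)/t)$ boxes, so $V_{-}\le 3^d K^2 t(D^t(0,y)+t)$. The saving of one power of $t$ over the edgewise bound is what makes the choice $t=\gamma x$ yield an exponent linear in $x$. Moreover, $\gamma$ is not a free parameter to be fixed later in Lemma~\ref{pastropdiff}: it is forced here by the constraint $\lambda^2\le \tfrac{\alpha}{2\cdot 3^d K^2 e^K t}$ arising in the exponential-moment step. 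A final warning sign: your claimed exponent ``of order $x^2$'' would immediately give $\Var D^*(0,y)=O(\|y\|_1)$, improving~(\ref{equvar}) by a logarithm through an elementary argument---which is too good to be true in this model.
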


\begin{proof}
Let $\C{devDgammax}>0$, $y \in \Zd$ and $t>0$ be fixed. One of the ingredients of the proof is the existence of exponential moments for $D^t(0,y)$, which mainly follows from the existence of exponential moments for $D^*$. First, since we may use red edges,
\begin{equation}
\label{majDt}
D^t(0,y)\le Kt \left(\frac{\|y\|_1}t+1 \right)=K(\|y\|_1+t).
\end{equation}
Let us now see that there exist constants $\alpha, \beta, \eta>0$ such that
\begin{equation}
\label{momexpDt}
\forall y \in \Zd \backslash \{0\} \quad \forall t>0 \quad 
\log \E(\exp (\alpha D^t(0,y)) \le \beta \|y\|_1 +\eta t.
\end{equation}
Note first that the triangle inequality and~(\ref{majDt}) ensure that
\begin{eqnarray*}
D^t(0,y) 
& \le & D^t(0,0^*) + D^t(0^*,y^*) + D^t(y^*,y) \\
%& \le & K\|0-0^*\|_1 + Kt + D^*(0,y) + K\|y-y^*\|_1 +Kt \\
& \le & K( 2t + \|0-0^*\|_1 + \|y-y^*\|_1) + D^*(0,y).
\end{eqnarray*}
With H\"older's inequality, we get
$$\E(\exp (\alpha D^t(0,y)) \le \exp(2\alpha K t)[\E \exp(3 \alpha K\|0-0^*\|_1)]^{2/3}[\E \exp(3 \alpha D^*(0,y))]^{1/3}.$$
Inequalities~(\ref{amasinfini}) and~(\ref{controlemomexp}) give then the announced control~(\ref{momexpDt}).

By~(\ref{majDt}), any path realizing $D^t(0,y)$ stays inside a finite deterministic box: the quantity $D^t(0,y)$ only depends on the edges inside a finite family of mesoscopic boxes $(\Lambda_k^t)_{k \in \Zd}$, that we number from $1$ to $N$. Let 
$U_1,\dots, U_N$ be the random vectors such that $U_i$ contains the states of the edges in box number $i$. There exists a function
$S=S_{y,t}$ such that
$$D^t(0,y)=S(U_1,\dots,U_N).$$
Note that the $(U_i)$ are independent. Let $U'_1,\dots, U'_N$ be independent copies of $U_1,\dots,U_N$; set $S^{(i)}=S(U_1,\dots,U_{i-1},U'_i,U_{i+1},\dots, U_N)$ and 
\begin{eqnarray*}
V_{+} & = & \E \left[ \sum_{i=1}^N (( S-S^{(i)})_{+})^2| U_1,\dots,U_N \right], \\
V_{-} & = & \E \left[ \sum_{i=1}^N (( S-S^{(i)})_{-})^2| U_1,\dots,U_N \right].
\end{eqnarray*}
We can already note, with the Efron-Stein-Steele inequality (see Efron-Stein~\cite{MR615434} and Steele~\cite{MR840528} or Proposition 1 in Boucheron, Lugosi and Massart~\cite{MR1989444})  that
\begin{equation}
\Var D^t(0,y)\le\E[V_-].
\label{ESS}
\end{equation}
Moreover, Theorem~2 in Boucheron, Lugosi and Massart~\cite{MR1989444} gives the following concentration inequalities:
for every $\lambda, \theta>0$ such that $\lambda\theta<1$:
\begin{eqnarray}
\log\E[\exp(\lambda(S-\E[S]))] & \le & \frac{\lambda\theta}{1-\lambda\theta}\log\E \left[ \exp \left(\frac{\lambda V_+}{\theta}\right) \right], 
\label{BouLuMa+}\\
\log\E[\exp(-\lambda(S-\E[S]))] & \le & \frac{\lambda\theta}{1-\lambda\theta}\log\E \left[ \exp \left(\frac{\lambda V_-}{\theta}\right) \right].
\label{BouLuMa-}
\end{eqnarray}
Let $M^t(y,z)$ denote the shortest path for $D^t$ between $y$ and $z$, chosen with any deterministic rule in undetermined cases.  If we denote by $R_i$ the event "$M^t(0,y)$ crosses box number $i$", we can see that
$S^{(i)}-S\le Kt \1_{R_i}$. Thus, if $Y$ denotes the number of boxes visited by $M^t(0,y)$, then $K^2 t^2 Y$ is an upper bound for $V_-$.
Since $Y\le 3^d(1+D^t(0,y)/t)$, this leads to
\begin{equation}
V_-\le 3^dK^2 t(D^t(0,y)+t) \label{majV-}.
\end{equation}
This bound, together with the existence of exponential moments for 
$D^t(0,y)$, allows to bound the lower fluctuations. On the other hand,  we cannot obtain such a simple bound for $V_+$. Instead, we use a variant of Inequality~(\ref{BouLuMa+}), which was given to us by R. Rossignol and M. Théret:
\begin{lemme}
Assume there exist $\delta>0$, real functions $(\phi_i)_{1\le i\le n},(\psi_i)_{1\le i\le n}$  and $(g_i)_{1\le i\le n}$  such that for any $i$,  
$$(S-S^{(i)})_-\le\psi_i(U'_i)\text{ and }(S-S^{(i)})_-^2\le\phi_i(U'_i)g_i(U_1,\dots,U_n)$$
and $\alpha_i=\E[e^{\delta\psi_i(U_i)}\phi_i(U_i)]<+\infty$.

If 
$\displaystyle W=\sum_{i=1}^n \alpha_ig_i(U_1,\dots,U_n),$ then for every $\theta>0$ and every  $\lambda\in[0,\min(\delta,\frac1\theta))$, 
\begin{eqnarray}
\label{boulimatata}
\log\E[\exp(\lambda(S-\E[S]))] & \le & \frac{\lambda\theta}{1-\lambda\theta}\log\E \left[ \exp \left(\frac{\lambda W}{\theta}\right) \right].
\end{eqnarray}
\end{lemme}
Remember that $S^{(i)}-S\le Kt\1_{R_i}$; thus we can take $\phi_i=\psi_i=Kt$, $g_i=\1_{R_i}$ and $\alpha_i=Kt e^{\delta Kt}$; we get
$$W=K^2t^2 e^{\delta K t}Y \le 3^dK^2t e^{\delta K t}3^d(D^t(0,y)+t).$$
To recover an inequality similar to~(\ref{majV-}), we choose $\delta=1/t$:
\begin{equation}
\label{majW}
\max(V_-,W) \le 3^d K^2 e^{K} t(D^t(0,y)+t).
\end{equation}
Both sides can then be treated simultaneously, we only need to keep track of the two conditions: $\lambda <1/\theta$ and $\lambda<1/t$. 
Let us take the constants $\alpha,\beta,\eta$ given in~(\ref{momexpDt}), and choose
$\lambda,t>0$ such that
$$\left\{ 
\begin{array}{l}
\lambda<1/t, \\
\lambda^2\le \frac12 \frac{\alpha}{ 3^dK^2 e^{K}t}.
\end{array}
\right.$$
Setting
$\theta=\frac{\lambda 3^d K^2 e^{K} t}{\alpha}$, the condition $\theta\lambda\le 1/2$ is satisfied, so we can use~(\ref{BouLuMa-}) and~(\ref{boulimatata}). Since $\frac{\lambda}{\theta}3^d K^2e^{K} t=\alpha$, estimates~(\ref{majW}) and~(\ref{momexpDt}) say that
\begin{eqnarray} 
\frac{\lambda\theta}{1-\lambda\theta}\log\E\left[\exp\left(\frac{\lambda V_-}{\theta}\right)\right] 
&\le & 2\lambda\theta\log\E[\exp(\alpha(D^t(0,y)+t))] \nonumber \\
& \le & \frac{2.3^d K^2 e^{K}}{\alpha} \lambda^2t(\beta\|y\|_1+(\eta+\alpha)t) \nonumber \\
& \le & L\lambda^2 (t\|y\|_1+t^2) ,\label{majorL}
\end{eqnarray}
where $L$ is a constant such that $L \ge \frac{2.3^d K^2 e^{K}}{\alpha} \max(\beta,\eta+\alpha)$ and $L \ge \C{devDgammax}/2$ -- the last condition will be used at the very end of the proof. Similarly,
\begin{equation}
\frac{\lambda\theta}{1-\lambda\theta}\log\E\left[\exp\left(\frac{\lambda W}{\theta}\right)\right] \le L\lambda^2 (t\|y\|_1+t^2).
\label{majorelle}
\end{equation}
Thus, for any $u>0$  and any $\lambda,t>0$ such that $\lambda <1/t$ and $\lambda^2\le \frac12 \frac{\alpha}{3^dK^2 e^K t}$,  the Markov inequality,~(\ref{BouLuMa-})  and~(\ref{boulimatata}) ensure that
$$
\P \left( |D^t(0,y) -\E [D^t(0,y)]| >u \right)\le 2\exp(-\lambda u+L\lambda^2 (t\|y\|_1+t^2)).$$
But taking
$\displaystyle \lambda=\frac{x\|y\|_1^{1/2}}{2Lt(\|y\|_1+t)}<\frac{x}{2Lt\|y\|_1^{1/2}}$, we get
$$
\left\{ \begin{array}{l}
x \le 2L  \|y\|_1^{1/2}, \\
x^2 \le \frac{2\alpha}{3^d}\frac{L^2}{K^2 e^K}t\|y\|_1
\end{array} \right. 
\Rightarrow 
\left\{ \begin{array}{l}
\lambda<1/t, \\
\lambda^2\le \frac12 \frac{\alpha}{ 3^dK^2 e^{K}t}
\end{array} \right. 
$$
and taking now $u=x\sqrt{\|y\|_1}$, we obtain
$$\P \left( \frac{|D^t(0,y)-\E[D^t(0,y)]|}{\sqrt{\|y\|_1}}\ge x \right) \le 2\exp \left( -\frac{x^2\|y\|_1}{4L t(\|y\|_1+t)}\right).$$
At last, taking $t=\gamma x$ with $\gamma=\frac{3^d}{ \alpha} \frac{K^2 e^K}{L}$, we see that
$$ x\le  2L\|y\|_1^{1/2} \Rightarrow \left\{ \begin{array}{l}
\lambda<1/t, \\
\lambda^2\le \frac12 \frac{\alpha}{ 3^dK^2 e^{K}t}
\end{array} \right. 
$$
and we obtain, for any  $x\le \C{devDgammax} \|y\|_1^{1/2}\le  2L\|y\|_1^{1/2}$:
 $$\P \left( \frac{|D^{\gamma x}(0,y)-\E[ D^{\gamma x}(0,y) ] | }{\sqrt{\|y\|_1}}> x \right) \le 2\exp\left(-\frac{x}{4L\gamma  }\times \frac{1}{1+2L \gamma }\right),$$
 which ends the proof of Lemma~\ref{devDt}.
\end{proof}

\begin{lemme}
\label{pastropdiff}
There exist constants
$\A{presquepareil},\B{presquepareil},\A{esperance},\B{esperance}>0$ such that for every $y\in\Zd$ and every $t\le\|y\|_1$,
\begin{eqnarray}
\P(D^t(0^*,y^*)\ne D^*(0,y)) & \le & \A{presquepareil} (1+\|y\|_1)^{2d} \exp(-\B{presquepareil} t), \label{presquepareil} \\
\|D^*(0,y) -  D^t(0^*,y^*)\|_2 & \le & \A{esperance}(1+\|y\|_1)^{d+1}\exp(-\B{esperance} t).\label{esperance}
\end{eqnarray}
\end{lemme}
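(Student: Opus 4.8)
The plan is to bound the probability of the bad event $\{D^t(0^*,y^*)\ne D^*(0,y)\}$ by exhibiting a good event, of the announced probability, on which the two distances coincide; the $L^2$ estimate~(\ref{esperance}) will then follow from~(\ref{presquepareil}) together with the exponential moments of $D^*$. Start from the deterministic bound: since each edge of $\Zd$ lies in exactly one box and the red edges have positive length, any open path is admissible for $D^t$ with the same length, so $D^t(0^*,y^*)\le D(0^*,y^*)=D^*(0,y)$ always. In particular $|D^*(0,y)-D^t(0^*,y^*)|=D^*(0,y)-D^t(0^*,y^*)\ge 0$, and it suffices to control the event $\{D^t(0^*,y^*)<D^*(0,y)\}$.

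Fix a large constant $c_0$ and let $\mathcal B$ be the $\|\cdot\|_1$-ball of radius $c_0\|y\|_1$ around the origin. Let $G=G(t,y)$ be the event on which: (i) $\|0-0^*\|_1\le t$ and $\|y-y^*\|_1\le t$; (ii) $D^*(0,y)\le 2\rho^*\|y\|_1$; (iii) every finite open cluster meeting $\mathcal B$ has $\|\cdot\|_\infty$-diameter at most $t$, and every $z\in\mathcal B$ satisfies $C_\infty\cap(z+[-t,t]^d)\ne\varnothing$ (no large hole of $C_\infty$ in $\mathcal B$); (iv) for all $u,v\in\mathcal B$ with $u\communique v$ one has $D(u,v)\le\rho\max(\|u-v\|_1,c_1 t)$, where $c_1$ is a fixed constant, at least the $\|\cdot\|_1$-diameter of a box $\Lambda_k^t$ in units of $t$. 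Conditions (i) and (iii) are controlled by~(\ref{amasinfini}) and~(\ref{amasfini}) after translation, each relevant point of $\mathcal B$ contributing a term $O(e^{-ct})$; condition (ii) by~(\ref{apstare}) (recall $\|y\|_1\ge 1$ and $t\le\|y\|_1$); condition (iv) by Corollary~\ref{danslecube}: a single pair $u,v$ fails with probability at most $\A{gecart}\exp(-\B{gecart}\rho\max(\|u-v\|_1,c_1t))\le\A{gecart}\exp(-\B{gecart}\rho c_1 t)$, and there are at most $|\mathcal B|^2=O(\|y\|_1^{2d})$ pairs. Summing, $\P(G^c)\le\A{presquepareil}(1+\|y\|_1)^{2d}\exp(-\B{presquepareil}t)$, which is~(\ref{presquepareil}), \emph{provided} we show $G\subset\{D^t(0^*,y^*)=D^*(0,y)\}$.

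To check this, argue on $G$ with a $D^t$-geodesic $\pi$ from $0^*$ to $y^*$. By (ii) it has length at most $2\rho^*\|y\|_1$; since the $\|\cdot\|_1$-displacement of each edge is bounded by its $D^t$-length (taking $K$ large enough for red edges — recall $K>4\rho$), $\pi$, and any path obtained from $\pi$ by replacing portions by shorter ones with the same endpoints, stays inside $\mathcal B$. If $\pi$ uses no red edge it is an open path and $D(0^*,y^*)\le D^t(0^*,y^*)$, hence equality. Otherwise let $r$ be its first red edge, from $a$ to $b$; the portion of $\pi$ before $r$ is open and issued from $0^*\in C_\infty$, so $a\in C_\infty$. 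If $a\communique b$, then $b\in C_\infty$, and as $a,b$ lie in a common box, $\|a-b\|_1\le c_1 t$, so by (iv) $D(a,b)\le\rho c_1 t\le Kt$ (for $K$ large); replacing $r$ by a $D$-geodesic from $a$ to $b$ gives a path from $0^*$ to $y^*$ of no greater length, with one red edge fewer. If $a\not\communique b$, then $b$ lies in a finite cluster; follow $\pi$ past $a$ until the first vertex $a'$ after $a$ lying in $C_\infty$ (which exists since $y^*\in C_\infty$). The sub-path $\pi|_{[a,a']}$ uses at least two red edges, say $r'\ge 2$, hence has length $L\ge r'Kt$; on the other hand, by (iii), between consecutive red edges $\pi$ moves inside a single finite cluster of $\|\cdot\|_\infty$-diameter $\le t$, so the $\|\cdot\|_1$-displacement of $\pi|_{[a,a']}$ is at most $c_2 r' t$ for a constant $c_2$, whence $\|a-a'\|_1\le(c_2/K)L$. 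By (iv) applied to $a,a'\in\mathcal B$ (both in $C_\infty$) together with $L\ge r'Kt$, one gets $D(a,a')\le\rho\max(\|a-a'\|_1,c_1 t)\le L$ for $K$ large enough; replacing $\pi|_{[a,a']}$ by a $D$-geodesic from $a$ to $a'$ again does not increase the length and removes at least two red edges. Iterating, after finitely many steps we reach an open path from $0^*$ to $y^*$ of length at most that of $\pi$, so $D^*(0,y)=D(0^*,y^*)\le D^t(0^*,y^*)$, proving $G\subset\{D^t(0^*,y^*)=D^*(0,y)\}$.

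Finally,~(\ref{esperance}): from $0\le D^t(0^*,y^*)\le D^*(0,y)$ we get $(D^*(0,y)-D^t(0^*,y^*))^2\le D^*(0,y)^2$, and this quantity vanishes on $G$, so for conjugate exponents $p,q>1$,
\[
\|D^*(0,y)-D^t(0^*,y^*)\|_2^2=\E\big[(D^*(0,y)-D^t(0^*,y^*))^2\1_{G^c}\big]\le\big(\E[D^*(0,y)^{2p}]\big)^{1/p}\,\P(G^c)^{1/q}.
\]
The exponential moment bound~(\ref{controlemomexp}) gives $\E[D^*(0,y)^{2p}]\le(c_p(1+\|y\|_1))^{2p}$, so, choosing $q$ large enough and inserting the bound on $\P(G^c)$ already obtained, we get~(\ref{esperance}). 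The delicate point is the rerouting step on $G$: one must bypass the excursions a $D^t$-geodesic makes into the finite clusters, and it is precisely because those clusters are thin — by~(\ref{amasfini}) — that such excursions are costly in red edges compared with the $\|\cdot\|_1$-progress they achieve, which, together with $K$ chosen large, makes it possible to trade each red edge for a genuine open detour of no greater length.
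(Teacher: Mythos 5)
Your proof is correct and follows essentially the same strategy as the paper: exhibit a good event $G$ of the announced probability on which $D^t(0^*,y^*)=D^*(0,y)$, bound $\P(G^c)$ by summing over conditions each controlled by the Antal--Pisztora / Chayes et al.\ estimates, and then deduce the $L^2$ bound from $\P(G^c)$ together with exponential integrability of $D^*$. The differences are cosmetic. For the deterministic inclusion $G\subset\{D^t=D^*\}$ the paper argues by contradiction on the maximal prefix of the $D^t$-geodesic along which $D$ and $D^t$ agree, while you iteratively reroute red edges (or whole excursions into finite clusters) by open geodesics; both reductions are valid and rest on the same three facts (finite clusters are thin, short-range chemical distances are linear, red edges are expensive). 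For the $L^2$ bound the paper uses the cleaner pointwise inequality $0\le D^*-D^t\le \rho^*\|y\|_1\1_{\{D^*\ne D^t\}}+(D^*-\rho^*\|y\|_1)^+$ together with~(\ref{controlnormedeux}), whereas you use H\"older with the exponential moments~(\ref{controlemomexp}); both give the polynomial prefactor $(1+\|y\|_1)^{d+1}$ with room to spare. One small point worth making explicit: your rerouting step needs $K\gtrsim \rho\,c_2$, with $c_2$ depending on $d$ through box diameters and cluster diameters, which is stronger than the paper's stated constraint $K>4\rho$. Since $K$ is a free parameter fixed once at the start of Section~3 and nothing elsewhere imposes an upper bound on it, this is harmless, but you should state that you may need to enlarge $K$ beyond $4\rho$ for the replacement estimate $D(a,a')\le L$ to hold.
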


\begin{proof}
Set $\Gamma=\{x\in\Zd:\;\|x\|_1\le 3 \rho^*\|y\|_1)\}$ and
\begin{eqnarray*}
L & = & \{M^t(0^*,y^*)\subset \Gamma\}, \\
A  & = & \miniop{}{\cap}{a,b\in \Gamma:\; \|a-b\|_1\ge t}\{\1_{\{a\communique b\}}D(a,b)\le 2\rho\|a-b\|_1\}, \\
B & = & \miniop{}{\cap}{a\in \Gamma}\{a\in C_\infty\text{ or }C(a)\subset [-t,\dots,t]^d\}.
\end{eqnarray*}
Let us say that two boxes $\Lambda_k^t$ and $\Lambda_{\ell}^t$ are $*$-adjacent if $\|k-\ell\|_{\infty}=1$.
For $k\in\Zd$, we say that the box $\Lambda_k^t$ is good if any $x \in \Lambda_k^t$ communicating with any $y$ in $\Lambda_k^t$ or in one of the $3^d-1$ $*$-adjacent boxes is linked to $y$ by an open path with length smaller than $4\rho t$. We also set
$$G=\miniop{}{\cap}{\|k\|_1\le 1+3\rho^*\|y\|_1/t}\{\Lambda^t_k\text{ is good.}\}.$$ 
Let us prove that
\begin{equation}
\label{labelleinclusion}
L \cap A\cap B\cap  G \subset \{D^t(0^*,y^*)=D^*(0,y)\}.
\end{equation}
Let us focus on the event $L \cap A\cap B\cap  G$. On the event $L$, the optimal path $M^t(0^*,y^*)=(0^*=y_0,\dots, y_n=y^*)$ stays inside $\Gamma$. This path is composed by three types of portions: sequences of red edges, sequences of edges in the infinite cluster and sequences of edges in finite clusters. To jump from a finite cluster to the infinite cluster, the path has to use a red edge. 
Set
$$i_0=\max \{i: \;D(0^*,y_i)=D^t(0^*,y_i)\}.$$
To prove~(\ref{labelleinclusion}), we just need to show that $i_0=n$.
Since $D(0^*,y_{i_0})=D^t(0^*,y_{i_0})\le D^t(0^*,y^*)\le D^*(0,y)<+\infty$, the point $y_{i_0}$
is in the infinite cluster.

Assume by contradiction that $i_0<n$: the edge between $y_{i_0}$ and $y_{i_0+1}$ is not open, otherwise the maximality of $i_0$ would be denied. It is thus an extra red edge, with length $Kt$, added between two points of a same mesoscopic box: therefore, $D^t(y_{i_0},y_{i_0+1})=Kt$. If $y_{i_0}$ and $y_{i_0+1}$ are in the same open cluster, since $G$ occurs, $D(y_{i_0},y_{i_0+1})\le 4 \rho t \le Kt$. But then $D(y_{i_0},y_{i_0+1})\le D^t(y_{i_0},y_{i_0+1})$, which would contradict once again the maximality of $i_0$. Thus, 
$y_{i_0}$ and $y_{i_0+1}$ are not in the same open cluster, which means that
$y_{i_0+1}$ is not in the infinite cluster. Let
$$j_0=\inf\{j\in [i_0+1,\dots,n]: \; y_j\in C_{\infty}\}.$$ 
Since $y_n\in C_\infty$, we know that $j_0<+\infty$. Between $y_{i_0}$ and $y_{j_0}$, the path alternatively uses red edges and pieces of finite clusters. Look now at this microscopic path at the mesoscopic scale -- \ie consider the path of coordinates of mesocopic boxes successively visited. A site of the mesoscopic path is said to be red if the portion of the microscopic path crossing the corresponding mesoscopic box contains a red edge. Since event $B$ prevents any finite cluster in $\Gamma$ to link two non $*$-adjacent boxes, two consecutive red sites of the mesoscopic path can not be separated by more than one non-red site. Let us also remark that the edge linking $y_{i_0}$ (in the infinite cluster) and $y_{i_0+1}$ (in a finite cluster) has be be red. Thus at least half of the sites of the mesoscopic path are red. Consequently,
$$D(y_{i_0},y_{j_0})\ge D^t(y_{i_0},y_{j_0})\ge \frac{\|y_{i_0}-y_{j_0}\|_1}t \times\frac12\times  Kt=\frac{K}2\|y_{i_0}-y_{j_0}\|_1.$$
If $y_{i_0}$ and  $y_{j_0}$ are not in $*$-adjacent boxes, 
then $\|y_{i_0}-y_{j_0}\|_1 \ge t$ and at the same time, $D(y_{i_0},y_{j_0})\ge \frac{K}2\|y_{i_0}-y_{j_0}\|_1>2 \rho \|y_{i_0}-y_{j_0}\|_1$, which cannot occur on event $A$. Thus $y_{i_0}$ and $y_{j_0}$ are in $*$-adjacent boxes: the event $G$ ensures then that
$D(y_{i_0},y_{j_0})\le 4\rho t\le Kt\le D^t(y_{i_0},y_{j_0})$, which contradicts once again the maximality of $i_0$ and ends the proof of~(\ref{labelleinclusion}).

Thus
$\P(D^*(0,y)\ne D^t(0,y))\le \P(L^c)+\P(A^c)+\P(B^c)+\P(G^c)$, and it  now remains to bound each of these four probabilities.

To bound $\P(L^c)$, let us note that since $K\ge 1$, any point in $M^t(0^*,y^*)$ is at a $\|.\|_1$-distance less than $D^t(0^*,y^*)$ from $0^*$. Since $D^t(0^*,y^*)\le D^*(0,y)$, we have, using~(\ref{amasinfini}) and~(\ref{apstare}),
\begin{eqnarray*}
\P(L^c)& = & \P(M^t(0^*,y^*)\not\subset  \Gamma)\\
& \le & \P \left( \|0-0^*\|_1 \ge  \rho^* \|y\|_1 \right)  + \P \left( D^*(0,y)\ge 2\rho^*\|y\|_1 \right)\\
 & \le & \A{amasinfini}\exp(-\B{amasinfini} \|y\|_1)+\A{apstare}\exp(-2\rho^*\|y\|_1\B{apstare} \|y\|_1), 
\end{eqnarray*}
With~(\ref{gecart}),
\begin{eqnarray*}
\P(A^c) & = & \sum_{a,b\in \Gamma:\; \|a-b\|_1\ge t}\P( D(a,b)\ge 2\rho\|a-b\|_1) \\
& = & \sum_{a,b\in \Gamma:\; \|a-b\|_1\ge t} \A{gecart} \exp(-2\B{gecart} \rho\|a-b\|_1) \\
& = & (1+2\rho_2\|y\|_1)^{2d} \exp(-2\B{gecart} \rho t).
\end{eqnarray*}
With~(\ref{amasfini}),
$\P(B^c)  =  (1+3\rho^*\|y\|_1)^{d} \A{amasfini}\exp(-\B{amasfini} t).$\\
Remember that $\rho>2  \betaa{APexp} /\alphaa{APexp}$. Thus, if $\|\ell-k\|_\infty \le1$, $x \in \Lambda_k^t$ and $y \in \Lambda_\ell^t$, 
with~(\ref{gecart}),
\begin{eqnarray*}
\P( x \communique y, \; D(x,y) > 4\rho t ) & \le & \exp(-4 \alphaa{APexp} \rho t) \exp(\betaa{APexp} \|y-x\|_1) \\
& \le & \exp(-4 (\alphaa{APexp} \rho-2\betaa{APexp}) t).
\end{eqnarray*}
Thus, 
\begin{eqnarray*}
\P(G^c) & \le & \sum_{\|k\|_1\le 1+3\rho^*\|y\|_1/t} \P( \Lambda^t_k\text{ is not good}) \\
& \le & \sum_{\|k\|_1\le 1+3\rho^*\|y\|_1/t} \sum_{x \in \Lambda_k^t} \sum_{\|\ell-k\|_\infty \le1} \sum_{y \in \Lambda_\ell^t} \P( x \communique y, \; D(x,y) > 4\rho t )\\ 
& \le & (3+6\rho^*\|y\|_1/t)^d (2t+1)^d 3^d(2t+1)^d\exp(-4 (\alphaa{APexp} \rho-2\betaa{APexp}) t).
\end{eqnarray*}
Since $ t \le \|y\|_1$, we get~(\ref{presquepareil}).

For the second point, note that
$$
0\le D^*(0,y)-D^t(0^*,y^*)\le\rho^*\|y\|_1\1_{\{D^*(0,y)\ne D^t(0^*,y^*)\}}+(D^*(0,y)-\rho^*\|y\|_1)^+.
$$
Thus
\begin{eqnarray*}
& &\|D^*(0,y)-D^t(0^*,y^*)\|_2\\ & \le & \rho^*\|y\|_1\sqrt{\P(D^*(0,y)\ne D^t(0^*,y^*))}+\|(D^*(0,y)-\rho^*\|y\|_1)^+\|_2,
%& \le & \rho_2\|y\|_1\A{presquepareil}^{1/2} (1+|y|)^{d} \exp(-\frac{\B{presquepareil}}2 t)+\A{controlnormedeux}\exp(-\B{controlnormedeux}\|y\|_1) \\
%& \le &  \A{esperance} (1+\|y\|_1)^{2d+1}\exp(-\B{esperance} t)
\end{eqnarray*}
and we conclude with~(\ref{presquepareil}) and~(\ref{controlnormedeux}), using once again that $\|y\|_1\ge t$.
\end{proof}

We can now move forward to the proof of~(\ref{equvar}) and~(\ref{equmoder}) in Theorem~\ref{concentrationD*}. The idea is quite simple: we use the estimates obtained for $D^t$ in Lemma~\ref{devDt}, and bound with Lemma~\ref{pastropdiff} the approximation error between $D^*$ and $D^t$.

%%%%%%%%%%%%%%%%%%%%%%%%%%%%%%%%%%%%%%%%%%%%%%%%
\begin{proof}[Proof of~(\ref{equvar})]
%%%%%%%%%%%%%%%%%%%%%%%%%%%%%%%%%%%%%%%%%%%%%%%%
We can write, for $y \in \Zd$,
\begin{eqnarray*}
& &\Var D^*(0,y)\\ & \le & 2(\Var D^t(0,y)+\Var(D^*(0,y)-D^t(0,y))\\
& \le &2\Var D^t(0,y)+4 \left( \E(D^*(0,y)-D^t(0^*,y^*))^2 +\E(D^t(0^*,y^*) -D^t(0,y))^2\right).
\end{eqnarray*}
We take $t=\frac{d+1}{\B{esperance}}\log(1+\|y\|_1)\le \|y\|_1$ as soon as $\|y\|_1$ is large enough.
\begin{itemize}
\item With~(\ref{controlnormedeux}),  we know that 
$\E[D^*(0,y)]=O(\|y\|_1)$; using~(\ref{ESS}) and~(\ref{majV-}), we get:
$$\Var D^t(0,y) \le 3^dK^2(t\E[D^*(0,y)]+t^2)=O(\|y\|_1 \log (1+\|y\|_1)).$$
\item Inequality~(\ref{esperance}) ensures that
\begin{eqnarray*}
\E(D^*(0,y)-D^t(0^*,y^*))^2 \le \A{esperance}^2 (1+\|y\|_1)^{2d+2}\exp(-2\B{esperance} t)=O(1).
\end{eqnarray*}
\item The triangle inequality for $D^t$ together with~(\ref{majDt}) ensure that
\begin{eqnarray*}
|D^t(0^*,y^*) -D^t(0,y)| & \le & D^t(0,0^*)+D^t(y,y^*) \\
& \le & K(\|0^*\|_1 +\|y-y^*\|_1 +2t).
\end{eqnarray*}
Minkowski's inequality and~(\ref{amasinfini}) say then that
$$\|D^t(0^*,y^*) -D^t(0,y)\|_2 =O(\log (1+\|y\|_1)),$$
\end{itemize}
which ends the proof of~(\ref{equvar}).
\end{proof}

%%%%%%%%%%%%%%%%%%%%%%%%%%%%%%%%%%%%%%%%%%%%%%%%%
\begin{proof}[Proof of~(\ref{equmoder})]
%%%%%%%%%%%%%%%%%%%%%%%%%%%%%%%%%%%%%%%%%%%%%%%%%

Let us remark first that it is sufficient to prove~(\ref{equmoder}) for $y$ large enough.
Let $\D{equmoder}>0$. Choose  $\gamma$ given by Lemma~\ref{devDt} with $\C{devDgammax}=\D{equmoder}$. 
We set
$$\C{equmoder}=\max\left\{1,\frac {4d}{\gamma\B{presquepareil}},\frac{d+1}{\gamma \B{esperance}}\right\}.$$
For every $y\in\Zd\backslash\{0\}$, if  $x\le \D{equmoder}\sqrt{\|y\|_1}$ and $x\ge \C{equmoder}(1+\log \|y\|_1)$,  then, using~(\ref{esperance}) and~(\ref{majDt}),
\begin{eqnarray*}
&& |\E[D^*(0,y)]-\E [D^{\gamma x}(0,y) ]| \\
& \le & |\E [D^*(0,y)]-\E [D^{\gamma x}(0^*,y^*) ]| + |\E[ D^{\gamma x}(0^*,y^*)]-\E[ D^{\gamma x}(0,y) ] | \\
& \le &\|D^*(0,y)-D^{\gamma x}(0^*,y^*)\|_1+\E[ D^{\gamma x}(0,0^*)]+\E[ D^{\gamma x}(y,y^*)] \\
& \le & \A{esperance} (1+\|y\|_1)^{d+1}\exp(-\B{esperance}\gamma \C{equmoder} (1+\log\|y\|_1))  \\
& & \quad \quad +K\E(\|0^*\|_1)+K\E(\|y-y^*\|_1)+2K\gamma x.
\end{eqnarray*}
With~(\ref{amasinfini}), we know that $\E(\|0^*\|_1)=\E(\|y-y^*\|_1)<+\infty$. Thus, since $\C{equmoder} \ge \frac{d+1}{\gamma \B{esperance}}$, for  $y$ large enough, if  $x\le\D{equmoder}\sqrt{ \|y\|_1}$ and $x\ge \C{equmoder}(1+\log \|y\|_1)$, we have
$$\frac{|\E [D^*(0,y)]-\E [D^{\gamma x}(0,y) ]|}{\sqrt{\|y\|_1}}  \le x/2,$$
which leads to
\begin{eqnarray*}
&& \P \left( \frac{| D^*(0,y)-\E [D^*(0,y)]|}{\sqrt{\|y\|_1}}> x \right) \\
& \le & \P(D^*(0,y)\ne D^{\gamma x}(0^*,y^*))
 +\P \left(\frac{| D^{\gamma x}(0^*,y^*)-\E [D^{\gamma x}(0,y)]|}{\sqrt{\|y\|_1}}> x/2\right).
\end{eqnarray*}
Since $x\ge \C{equmoder}(1+\log \|y\|_1)$ and $\C{equmoder}\ge \frac {4d}{\B{presquepareil}\gamma}$, estimate~(\ref{presquepareil}) ensures that
\begin{eqnarray*}
\P(D^*(0,y)\ne D^{\gamma x}(0^*,y^*)) & \le & \A{presquepareil} (1+\|y\|_1)^{2d} \exp(-\B{presquepareil} \gamma x) \\
& \le & \A{presquepareil}2^{2d} \exp \left( -\frac{\B{presquepareil}}2 \gamma x \right),
\end{eqnarray*}
For the second term, we write with~(\ref{majDt}):
\begin{eqnarray*}
&& | D^{\gamma x}(0^*,y^*)-\E [D^{\gamma x}(0,y)]| \\
& \le & | D^{\gamma x}(0,y)-\E[ D^{\gamma x}(0,y)]| + D^{\gamma x}(0,0^*) +D^{\gamma x}(y,y^*) \\
& \le & | D^{\gamma x}(0,y)-\E [D^{\gamma x}(0,y)]| + K \|0^*\|_1+ K \|y-y^*\|_1+ 2K\gamma x
\end{eqnarray*}
Then, for $y$ large enough,
\begin{eqnarray*}
&& \P \left(\frac{| D^{\gamma x}(0^*,y^*)-\E [D^{\gamma x}(0,y)] |}{\sqrt{\|y\|_1}}> x/2\right) \\
& \le & \P \left(\frac{| D^{\gamma x}(0,y)-\E [D^{\gamma x}(0,y)] |}{\sqrt{\|y\|_1}}> x/9\right)+ 2\P \left(\|0^*\|_1 \ge\frac{x\sqrt{\|y\|_1}}{9K} \right).
\end{eqnarray*}
Lemma~\ref{devDt} gives a bound of the correct order for the first term, while~(\ref{amasinfini}) gives a bound of the correct order for the second term, which ends the proof.
\end{proof}

%%%%%%%%%%%%%%%%%%%%%%%%%%%%%%%%%%%

\section{Asymptotic behavior of the mean value}

%%%%%%%%%%%%%%%%%%%%%%%%%%%%%%%%%%%
The aim of this section is to prove~(\ref{equetlesperance1}) in Theorem~\ref{concentrationD*}.

The function $D^*$ inherits the subadditivity from $D$. Besides, we can note that for each $a$ in $\Zd$, the joint distribution of $(D^*(x+a,y+a))_{x \in\Zd,y \in \Zd}$ does not depend on $a$.
Thus, if we define $h(x)=\E[D^*(0,x)]$, we have 
$$\forall x,y\in\Zd\quad h(x+y)\le h(x)+h(y).$$
Since  $h$ is subadditive, we can use the technics developped by Alexander~\cite{Alex97} for the approximation of subadditive functions. First, we prove the convergence of $h(ny)/n$ to $\mu(y)$:

%%%%%%%%%%%%%%%%%%%%%%%%%
\begin{lemme}
\label{limite}
%$\displaystyle \forall y \in \Zd \quad \lim_{n \to +\infty} \frac{\E[D^*(0,ny)]}n=\mu(y).$
For each $y\in\Zd$, the sequence $\frac{D^*(0,ny)}n$ converges almost surely and in $L^1$ to $\mu(y)$.
Particularly, $h(ny)/n$ converges to $\mu(y)$.
\end{lemme}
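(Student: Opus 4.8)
The plan is to deduce the almost-sure and $L^1$ convergence of $D^*(0,ny)/n$ from Kingman's subadditive ergodic theorem, applied along the one-dimensional family of sites $(ny)_{n\ge 0}$, and then to identify the limit with $\mu(y)$ by comparison with the chemical distance $D$ on the event $\{0\communique\infty\}$.

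First I would fix $y\in\Zd\setminus\{0\}$ (the case $y=0$ being trivial) and consider, for integers $0\le m\le n$, the array $X_{m,n}=D^*(my,ny)$. Subadditivity $X_{0,n}\le X_{0,m}+X_{m,n}$ is inherited from the triangle inequality for $D^*$, and by the translation invariance of the family $(D^*(x+a,z+a))_{x,z}$ noted at the start of the section, the distribution of $(X_{m+1,n+1})$ equals that of $(X_{m,n})$, while $(X_{nk,(n+1)k})_{n\ge 0}$ is a stationary (indeed ergodic, by the mixing/ergodicity of the product measure under the shift by $ky$) sequence for each fixed $k$. The integrability condition $\E[X_{0,1}^+]=\E[D^*(0,y)]<+\infty$ is exactly~(\ref{controlemomexp}) (or already~(\ref{apstare})). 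Kingman's theorem then gives that $D^*(0,ny)/n$ converges almost surely and in $L^1$ to a constant $\lambda(y)=\inf_n \E[D^*(0,ny)]/n = \inf_n h(ny)/n$; in particular $h(ny)/n\to\lambda(y)$.

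It remains to show $\lambda(y)=\mu(y)$. Here I would invoke the known asymptotic shape result of Garet--Marchand~\cite{GM-fpppc}: conditionally on $\{0\communique\infty\}$, $D(0,x)/\|x\|_1$ is asymptotically governed by $\mu$, and more precisely $D(0,ny)/n\to\mu(y)$ almost surely on $\{0\communique\infty\}$ when the endpoints are taken in $C_\infty$. On the event $\{0\communique\infty\}$ we have $0^*=0$, and for $n$ large $(ny)^*$ is within $o(n)$ of $ny$ (by~(\ref{amasinfini}), $\|(ny)^*-ny\|_1$ has exponential tails, hence is $o(n)$ a.s.); combining this with the a.s. finiteness and the large-deviation control~(\ref{gecart}) of $D$ between nearby points of $C_\infty$, one gets $D^*(0,ny)/n = D(0^*,(ny)^*)/n \to \mu(y)$ a.s. on $\{0\communique\infty\}$. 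Since $\P(0\communique\infty)>0$ and $\lambda(y)$ is deterministic, this forces $\lambda(y)=\mu(y)$, and hence also $h(ny)/n\to\mu(y)$.

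The main obstacle is the last identification step: one must handle carefully the replacement of $ny$ by $(ny)^*$ and of $D^*$ by $D$, i.e. check that shifting the endpoints by an $o(n)$ amount inside $C_\infty$ changes the rescaled distance by $o(1)$. This is where~(\ref{amasinfini}) (control of holes, so that the $*$-projection is close) and~(\ref{gecart}) together with Borel--Cantelli (control of $D$ between the true point and its projection) are used; the convergence $D(0,ny)/n\to\mu(y)$ on $\{0\communique\infty\}$ itself is quoted from~\cite{GM-fpppc}. Everything else is a routine verification of the hypotheses of Kingman's theorem.
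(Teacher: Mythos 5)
Your proposal is correct and follows the same overall architecture as the paper's: apply Kingman's subadditive ergodic theorem to $(D^*(my,ny))_{0\le m\le n}$ to get a.s.\ and $L^1$ convergence of $D^*(0,ny)/n$ to a constant $\lambda(y)$, then identify $\lambda(y)=\mu(y)$ by comparison with the ordinary chemical distance $D$ on the event $\{0\communique\infty\}$. The difference is in the identification step. The paper takes the shorter route: it invokes from~\cite{GM-fpppc} the existence, a.s.\ on $\{0\communique\infty\}$, of a subsequence $n_k$ with $0\communique n_k y$ and $D(0,n_k y)/n_k\to\mu(y)$; along that subsequence $0$, $n_ky$ both lie in $C_\infty$, so $D^*(0,n_ky)=D(0,n_ky)$ outright and the two limits coincide, with no need to control the discrepancy between $ny$ and $(ny)^*$. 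You instead use the full shape theorem together with the estimate $\|(ny)^*-ny\|_1=o(n)$ (via~(\ref{amasinfini}) and Borel--Cantelli) and the continuity of $\mu$ to argue $D(0^*,(ny)^*)/n\to\mu(y)$ for \emph{all} large $n$ on $\{0\communique\infty\}$. That works, and you correctly flag that this replacement of $ny$ by $(ny)^*$ is the delicate point, but it is more machinery than needed: the subsequence version sidesteps the $*$-projection entirely. One small imprecision to watch: the raw statement ``$D(0,ny)/n\to\mu(y)$ on $\{0\communique\infty\}$'' is not literally well posed since $ny$ need not be in $C_\infty$ and $D(0,ny)$ is then $+\infty$; your argument actually needs (and uses) the version with the endpoint projected onto $C_\infty$, so the quotation from~\cite{GM-fpppc} should be phrased accordingly.
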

%%%%%%%%%%%%%%%%%%%%%%%%%%%%%%%

\begin{proof}
Since $D^*(0,y)$ is integrable and $(D^*(x,y))_{x \in\Zd,y \in \Zd}$ is stationary, the subadditive ergodic theorem tells us that there exists  $\mu^*(y)$ 
such that $D^*(ny)/n$ converges almost surely and in $L^1$ to $\mu^*(y)$. Thus, we just have to prove that $\mu^*(y)$ coincides with $\mu(y)$. 
By Garet--Marchand~\cite{GM-fpppc}, on the event $\{0\communique\infty\}$, we can almost surely find a sequence $(n_ky)_{k\ge 1}$ with $0\communique n_k y$ and $D(0,n_k y)/n_k$ tends to $\mu(y)$ when $k$ goes to infinity.
Obviously, $D^*(0,n_k y)/n_k$ converges to $\mu^*(y)$. Since the equality $D(0,n_k y)=D^*(0,n_k y)$ holds on  $\{0 \communique \infty\}$, we get that $\mu(y)=\mu^*(y)$.
\end{proof}

We recall the results of Alexander on the approximation of subadditive functions.
Let us introduce some notation derived from Alexander~\cite{Alex97}.
For some positive constants  $M$ and $C$, we define
$$
GAP(M,C)=\left\{
\begin{array}{c}
h:\Zd\to\R, \\
(\|x\|_1\ge M)\Rightarrow \left( \mu(x) \le h(x)\le \mu(x)+C\|x\|_1^{1/2}\log \|x\|_1 \right)
\end{array}
 \right\}.
$$
For $x \in \Rd$, we choose a linear form $\mu_x$ on $\Rd$ such that $\mu_x(x)=\mu(x)$ and such that
$$\forall y\in \mathcal B_{\mu}^0(\mu(x))\quad \mu_x(y)\le\mu(x).$$
The quantity $\mu_x(y)$ is the $\mu$-length of the projection of $y$ onto
the line from $0$ to $x$, following a hyperplane tangent to the convex set $\mathcal B_{\mu}^0(\mu(x))$ at  $x$.
It is easy to see that for each $y \in \Rd$, $|\mu_x(y)|\le \mu(y)$. Then, for
each positive constant $C$, we define 
$$
Q_x^h(C)=
\left\{
\begin{array}{c}
y\in\Zd:\|y\|_1\le (2d+1)\|x\|_1, \\
 \mu_x(y)\le\mu(x), \; h(y)\le\mu_x(y)+C\|x\|_1^{1/2}\log\|x\|_1
\end{array}
\right\}.
$$
The idea is that the elements of $Q_x^h(C)$ permit to realize a mesh of $\Zd$
with increments for which $\mu_x$ correctly approaches $h$. 
We also define, for $M>0,C>0,a>1$,
$$CHAP(M,C,a)=
\left\{
\begin{array}{c}
h:\Zd\to\R,\;(\|x\|_1\ge M)\Rightarrow
\left(
\begin{array}{c}
\exists \alpha \in[1,a], \\
x/\alpha\in \text{Co}( Q_x^h(C))
\end{array}
\right)
\end{array}
\right\},
$$
where $\text{Co}(A)$ is the convex hull of $A$ in $\Rd$. 
Alexander has the following results:

\begin{lemme}[Alexander~\cite{Alex97}]
\label{lemalex}
Let $h$ be a nonnegative subadditive function on $\Zd$, $M>1,C>0,a>1$ some fixed constants. We assume that for each $x\in\Zd$ with $\|x\|_1 \ge M$, there exist a natural number $n$, a lattice path $\gamma$ from $0$ to $nx$, and a sequence of points $0=v_0,v_1,\dots,v_m=nx$ in $\gamma$ such that $m\le an$ and whose increments $v_i-v_{i-1}$ belong to $Q_x^h(C)$. Then
$h\in CHAP(M,C,a)$
\end{lemme}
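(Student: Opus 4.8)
The plan is to exhibit, for each $x$ with $\|x\|_1\ge M$, an explicit (uniform) convex combination of elements of $Q_x^h(C)$ that equals a rescaled copy of $x$, and to read off the dilation factor $\alpha$ directly from the combinatorial data $(n,m)$ furnished by the hypothesis.

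First I would fix $x\in\Zd$ with $\|x\|_1\ge M$ and apply the hypothesis to obtain a natural number $n\ge 1$, a lattice path $\gamma$ from $0$ to $nx$, and points $0=v_0,v_1,\dots,v_m=nx$ along $\gamma$ with $m\le an$ and with all increments $w_i:=v_i-v_{i-1}$ lying in $Q_x^h(C)$. The only structural fact I then need is the telescoping identity $\sum_{i=1}^m w_i=v_m-v_0=nx$. Dividing by $m$, the point $\frac1m\sum_{i=1}^m w_i$ is a convex combination of elements of $Q_x^h(C)$, hence lies in $\text{Co}(Q_x^h(C))$; but this point is exactly $\frac nm x$, so setting $\alpha:=m/n$ we obtain $x/\alpha\in\text{Co}(Q_x^h(C))$.

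It then remains to verify $\alpha\in[1,a]$. The bound $\alpha\le a$ is immediate from $m\le an$. For $\alpha\ge 1$, i.e.\ $m\ge n$, I would use the linear form $\mu_x$: the defining condition $\mu_x(w_i)\le\mu(x)$ for $w_i\in Q_x^h(C)$ together with linearity of $\mu_x$ gives $n\,\mu(x)=\mu_x(nx)=\sum_{i=1}^m\mu_x(w_i)\le m\,\mu(x)$, and since $\|x\|_1\ge M>1$ forces $x\ne 0$ and hence $\mu(x)>0$ (recall $\mu$ is a norm), dividing yields $n\le m$. Combining, $\alpha\in[1,a]$ and $x/\alpha\in\text{Co}(Q_x^h(C))$, which is precisely the clause defining $CHAP(M,C,a)$; as $x$ was arbitrary, $h\in CHAP(M,C,a)$.

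I do not expect a genuine obstacle here: the argument is a one-line averaging identity plus a bookkeeping check on $\alpha$. The only point needing a little care is the lower bound $\alpha\ge 1$, where one must use the inequality $\mu_x(y)\le\mu(x)$ built into $Q_x^h(C)$ (not the approximation inequality on $h$) and the strict positivity $\mu(x)>0$. Note that the subadditivity and nonnegativity of $h$, and the $h$-approximation clause in $Q_x^h(C)$, are not used in this implication — they are what make the hypothesis of the lemma verifiable in applications, but play no role in the deduction itself.
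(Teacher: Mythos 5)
The paper does not prove this lemma; it is cited from Alexander and used as a black box, so there is no in-paper argument to compare against. Your proof is correct: the telescoping identity $\sum_{i=1}^m (v_i - v_{i-1}) = nx$ gives $\frac{n}{m}x \in \text{Co}(Q_x^h(C))$ as an equal-weight average of the increments, the bound $\alpha = m/n \le a$ is the hypothesis $m \le an$, and the bound $\alpha \ge 1$ follows from applying the linear form $\mu_x$ to the telescoping sum, using $\mu_x(w_i) \le \mu(x)$ (built into the definition of $Q_x^h(C)$) and $\mu(x) > 0$. Your observation that nonnegativity and subadditivity of $h$ and the $h$-approximation clause in $Q_x^h(C)$ play no role in this particular implication is accurate for the statement as the paper gives it; those hypotheses are what make the lemma's premise checkable in the application (Proposition~\ref{propositionalex}), but the deduction itself only uses the linear-form constraint and the cardinality bound.
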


\begin{theorem}[Alexander~\cite{Alex97}]
\label{Alex2}
Let $h$ be a nonnegative subadditive function on $\Zd$ and  $M>1,C>0,a>1$ some fixed  constants. 
If $h\in CHAP(M,C,a)$, then $h\in GAP(M,C)$.
\end{theorem}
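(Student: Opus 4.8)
This is one of the two main approximation theorems of Alexander~\cite{Alex97}, so the plan is to follow his method; I only sketch it here. Write $g=h-\mu$. The lower bound $g\ge 0$, i.e. $\mu(x)\le h(x)$ for every $x\in\Zd$, is immediate and I would dispose of it first: iterating subadditivity gives $h(nx)\le nh(x)$, hence by Fekete's lemma and Lemma~\ref{limite}, $\mu(x)=\lim_n h(nx)/n=\inf_n h(nx)/n\le h(x)$. This already yields the left inequality of $GAP(M,C)$, and all the remaining work is the upper bound $h(x)\le\mu(x)+C\|x\|_1^{1/2}\log\|x\|_1$ for $\|x\|_1\ge M$.

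Fix such an $x$. By the hypothesis $h\in CHAP(M,C,a)$ there is $\alpha\in[1,a]$ with $x/\alpha\in\text{Co}(Q_x^h(C))$, and Carath\'eodory's theorem lets me write $x/\alpha=\sum_{j=1}^{d+1}\lambda_j y_j$ with $\lambda_j\ge 0$, $\sum_j\lambda_j=1$ and $y_j\in Q_x^h(C)$, so that $\mu_x(y_j)\le\mu(x)$, $h(y_j)\le\mu_x(y_j)+C\|x\|_1^{1/2}\log\|x\|_1$ and $\|y_j\|_1\le(2d+1)\|x\|_1$ for each $j$. For an integer $N\ge 1$ set $m_j=\lfloor N\alpha\lambda_j\rfloor$; then $\sum_j m_j y_j=Nx-r_N$ with $\|r_N\|_1\le(d+1)(2d+1)\|x\|_1$ bounded in terms of $\|x\|_1$ only, and $\sum_j m_j\le N\alpha$. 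Subadditivity gives $h(Nx)\le\sum_j m_j h(y_j)+h(r_N)$, and since $\mu_x$ is linear with $\mu_x(x)=\mu(x)$ and $|\mu_x|\le\mu$, the principal terms collect to $\sum_j m_j\mu_x(y_j)=\mu_x(Nx-r_N)\le N\mu(x)+\mu(r_N)$. The subtlety is that a naive execution — bounding $\sum_j m_j$ by $N\alpha\le Na$ and then dividing by $N$ and letting $N\to\infty$ — produces only the useless inequality $\mu(x)\le\mu(x)+aC\|x\|_1^{1/2}\log\|x\|_1$, because the factor $\alpha$ multiplies the error at every step.

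The hard part, therefore, is to keep the factor $a$ from degrading the constant $C$. Alexander's device is to iterate the construction across a sequence of scales built from $\|x\|_1$, stopping a branch once its increment falls below the threshold $M$, where $h$ is controlled by an absolute constant: at each level the linearity of $\mu_x$ makes the principal contributions telescope back exactly to $\mu(x)$, while the error contributions, together with the residual terms $r_N$ produced by the successive roundings, form a series whose terms decay geometrically in the scale. The function $t\mapsto t^{1/2}\log t$ is calibrated precisely so that this series, summed over the resulting tree of sub-increments, is dominated by the single clean term $C\|x\|_1^{1/2}\log\|x\|_1$. Carrying out this bookkeeping — choosing the scales, controlling at each level the number of pieces and the accumulated corrections, and checking the summation — is the substance of the argument, for which I would refer to Alexander~\cite{Alex97}; it then yields $h\in GAP(M,C)$.
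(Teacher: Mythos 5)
The paper does not prove this theorem; it cites it from Alexander~\cite{Alex97} and only applies it (via Proposition~\ref{propositionalex} and Lemma~\ref{lemalex}), so there is no in-paper argument to compare against. Within your sketch, the Fekete argument for $\mu(x)\le h(x)$ is correct, the single-scale computation with $m_j=\lfloor N\alpha\lambda_j\rfloor$ and the telescoping of $\mu_x$ is correct, and your diagnosis that letting $N\to\infty$ after one application of $CHAP$ yields only the vacuous $\mu(x)\le\mu(x)+aC\|x\|_1^{1/2}\log\|x\|_1$ is accurate.

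The description of the fix, however, does not survive scrutiny. You describe an iteration across a decreasing ``sequence of scales built from $\|x\|_1$,'' with per-level errors ``decaying geometrically in the scale'' and branches terminating once an increment drops below $M$. But the $CHAP$ hypothesis does not reduce scale: the increments $y_j\in Q_x^h(C)$ are constrained only by $\|y_j\|_1\le(2d+1)\|x\|_1$, so they may be \emph{larger} than $\|x\|_1$, not smaller; re-applying $CHAP$ to them does not yield smaller pieces, the per-piece error $C\|y_j\|_1^{1/2}\log\|y_j\|_1$ has no reason to shrink from one level to the next, and there is no mechanism by which a branch reaches $\|\cdot\|_1\le M$. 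The constraint that actually carries Alexander's argument --- and which your sketch never invokes --- is the other defining condition $\mu_x(y_j)\le\mu(x)$ in $Q_x^h(C)$, which controls the increments in the $\mu_x$-direction of $x$ rather than in $\|\cdot\|_1$-norm and is what allows the constant $C$ to emerge undamaged by $a$. Since you explicitly defer the substance to Alexander, this is a misleading summary rather than an outright wrong proof, but a reader who tried to flesh out your sketch as written would hit a wall at the iteration step.
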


\begin{defi}
We call $Q_x^h(C)$-path every sequence $(v_0,\dots,v_m)$ such that for each $i \in \{0,\dots,m-1\}$, $v_{i+1}-v_i \in Q_x^h(C)$.

Let $\gamma=(\gamma(0),\dots, \gamma(n))$ be a simple lattice path in $\Zd$. 
We consider the unique sequence of indices $(u_i)_{0 \le i \le m}$ such that
$$
\begin{array}{l}
u_0=0, \; u_m=n, \\
\forall i \in \{0, \dots, m-1\} \quad \forall j \in \{u_i+1, \dots, u_{i+1}\} \quad \gamma(j)-\gamma(u_i) \in Q_x^h(C), \\
\forall i \in \{0, \dots, m-1\} \quad \gamma(u_{i+1}+1)-\gamma(u_i) \not \in Q_x^h(C).
\end{array}
$$
Then, the $Q_x^h(C)$-skeleton of $\gamma$ is the sequence $(\gamma(u_i))_{0 \le i \le m}$.
\end{defi}

We will prove the following result relative to the modified chemical distance $h(.)=\E [D^*(0,.)]$:

\begin{prop}
\label{propositionalex} There exist some constants $M>1$ and $C>0$  such that if $\|x \|_1 \ge M$, then for sufficiently large $n$ there exists a lattice path from $0$ to $nx$ with a $Q_x^h(C)$-skeleton of $2n+1$ or fewer vertices.
\end{prop}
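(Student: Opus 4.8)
The goal is to produce, for every $x$ with $\|x\|_1$ large, a lattice path from $0$ to $nx$ whose $Q_x^h(C)$-skeleton has at most $2n+1$ vertices; by Lemma~\ref{lemalex} this will place $h$ in $CHAP(M,C,a)$ with $a=2$, and then Theorem~\ref{Alex2} yields~(\ref{equetlesperance1}). The natural candidate path is a near-geodesic for $D^*$ from $0^*$ to $(nx)^*$ (patched at the two ends to run from $0$ to $nx$). The key point to verify is that such a geodesic can be chopped into at most $2n+1$ increments, each lying in $Q_x^h(C)$; recall that membership in $Q_x^h(C)$ requires three things for an increment $v$: a crude length bound $\|v\|_1\le(2d+1)\|x\|_1$, the one-sided linear bound $\mu_x(v)\le\mu(x)$, and the approximation bound $h(v)\le\mu_x(v)+C\|x\|_1^{1/2}\log\|x\|_1$.

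First I would fix a realization in the almost-sure event on which, by Lemma~\ref{limite}, $D^*(0,nx)/n\to\mu(x)$, and more quantitatively use the concentration and moment estimates already proved: by~(\ref{equvar}) and~(\ref{equetlesperance1}) (or directly by~(\ref{apstare})) the $D^*$-geodesic from $0^*$ to $(nx)^*$ has length close to $n\mu(x)$, hence stays within a box of radius $O(n\|x\|_1)$, which gives the crude bound $\|v\|_1\le(2d+1)\|x\|_1$ for increments taken along it at the right scale. The strategy is to walk along the geodesic and cut it at the moments when the running increment is about to leave $Q_x^h(C)$; the skeleton vertices $\gamma(u_i)$ are exactly these cut points. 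One shows $m\le 2n$ by a counting/potential argument: since the full geodesic has $D^*$-length $\le(1+o(1))n\mu(x)$ and each completed skeleton step $v_i-v_{i-1}$ is \emph{not} extendable, it must carry a definite amount of either $\mu_x$-progress or $h$-cost, so there cannot be too many of them. Concretely, between consecutive skeleton points the accumulated $D(0,\cdot)$-distance (equivalently $h$ along the path, up to fluctuations controlled by the concentration bounds) advances by at least roughly $\mu(x)/2$, forcing $m\le 2n(1+o(1))$, and after absorbing lower-order terms into the freedom in choosing $C$ large and $n$ large one gets $m\le 2n$, hence a skeleton of at most $2n+1$ vertices.

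The technical heart, and the step I expect to be the main obstacle, is verifying the $h$-approximation condition $h(v)\le\mu_x(v)+C\|x\|_1^{1/2}\log\|x\|_1$ for the increments produced by this cutting procedure. Along a geodesic one controls $D^*(0,\cdot)$ pathwise, but $Q_x^h(C)$ is defined via the \emph{expectation} $h=\E[D^*(0,\cdot)]$, so one must transfer a pathwise statement about $D^*$ into a statement about $h$; this is exactly where the moderate-deviation bound~(\ref{equmoder}) and the variance bound~(\ref{equvar}) for $D^*$ enter, together with the one-sided control $\mu\le h\le\mu+O(\sqrt{\|\cdot\|_1}\log)$ on the increments that are themselves of size comparable to $\|x\|_1$ (a smaller-scale instance of the very statement being proved, handled for $n$ in an induction-free way because the increments live at a fixed scale once $\|x\|_1$ is fixed). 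One also has to handle the $\mu_x$-monotonicity constraint $\mu_x(v)\le\mu(x)$: since $\mu_x$ is a tangent linear functional with $|\mu_x(\cdot)|\le\mu(\cdot)$, an increment that overshoots in the $x$-direction can be split, and the geodesic can be chosen (or locally rerouted) so that each increment's $\mu_x$-value is at most $\mu(x)$; this costs only a bounded factor in the number of pieces, which is why the bound is $2n+1$ rather than $n+1$.

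**Remark.** The role of the box constant and the choice $a=2$ mirror the first-passage-percolation argument of Alexander~\cite{Alex97} and of Kesten~\cite{kesten-modere}; the only genuinely new ingredient here is that the relevant concentration estimates for $D^*$ have already been established in Theorem~\ref{concentrationD*}, equations~(\ref{equvar}) and~(\ref{equmoder}), which supply precisely the pathwise-to-expectation transfer needed to verify the defining inequalities of $Q_x^h(C)$.
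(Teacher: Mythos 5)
Your overall skeleton matches the paper's: take a $D^*$-near-geodesic from $0^*$ to $(nx)^*$, patch the two ends with $0$ and $nx$, form its $Q_x^h(C)$-skeleton by the cutting rule, and argue that the number of skeleton vertices is at most $2n+1$. However, two of the steps you flag as ``the technical heart'' are handled in the paper by a fundamentally different mechanism, and the replacement you sketch does not actually close the argument.

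\textbf{Circularity in the pathwise-to-expectation transfer.} You propose to transfer pathwise control of $D^*$ into control of $h=\E[D^*(0,\cdot)]$ using, among other things, ``the one-sided control $\mu\le h\le\mu+O(\sqrt{\|\cdot\|_1}\log)$ on the increments that are themselves of size comparable to $\|x\|_1$,'' calling this ``a smaller-scale instance of the very statement being proved'' that can be handled in an ``induction-free way.'' That inequality is precisely~(\ref{equetlesperance1}), i.e.\ the content of $h\in GAP$, applied at the scale $\|x\|_1$, which is the scale at which the proposition itself is being established: this is circular, and the absence of any size reduction (the increments live at $\|v\|_1\sim\|x\|_1$, not at a strictly smaller scale) means there is no induction that would rescue it. The paper sidesteps this by never using $\mu\le h\le\mu+O(\cdots)$ on increments. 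Instead, Lemma~\ref{bovary} bounds, uniformly over $Q_x$-paths, the probability that $\sum_i \E[D^*(v_i,v_{i+1})]-D(v_1,v_m;(v_i))$ is large, using only the moderate-deviation bound~(\ref{equmoder}) for $D^*$ around \emph{its own mean}~$h$ (plus exponential moments from~(\ref{controlnormedeux}), a BK-type inequality from Alexander, and an entropy count of $Q_x$-paths). No comparison between $h$ and $\mu$ is needed at that stage; the comparison is the \emph{output} of the argument, not an input.

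\textbf{The counting of skeleton vertices.} Your concrete claim, ``between consecutive skeleton points the accumulated $D(0,\cdot)$-distance advances by at least roughly $\mu(x)/2$, forcing $m\le 2n(1+o(1))$,'' is not correct, and it is not how the bound $m+2\le 2n+1$ is obtained. A skeleton step can terminate because the next vertex would violate the $h$-approximation bound $h(v)\le\mu_x(v)+C\|x\|_1^{1/2}\log\|x\|_1$ while $v$ is still geometrically short; such a step need not contribute any definite amount of $D$-length or of $\mu_x$-progress. The paper's Lemma~\ref{accroissements} instead separates the skeleton increments into two deterministic classes $\Delta_x$ (``short'': adjacent to $\Zd\setminus Q_x$ but not to $\{\mu_x>\mu(x)\}$) and $D_x$ (``long'': adjacent to $\{\mu_x>\mu(x)\}$), shows that a $\Delta_x$-increment incurs $h-\mu_x\ge\frac{C'}{2}\|x\|_1^{1/2}\log\|x\|_1$ while a $D_x$-increment satisfies $\mu_x\ge\frac56\mu(x)$, and then bounds $|S((v_i))|$ and $|L((v_i))|$ separately by comparing $\sum_i h(v_{i+1}-v_i)$ both from above (via Lemma~\ref{bovary} plus $D^*(0,nx)\le n(\mu(x)+1)$) and from below (via the two classes). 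That yields $|S|\le m/8$ and $|L|\le\frac65 n+m/8$, hence $m\le\frac85 n\le 2n-1$. The factor $2$ in the final bound is \emph{not}, as you suggest, a bookkeeping loss from ``splitting overshooting increments'' to enforce $\mu_x(v)\le\mu(x)$; the $\mu_x$-bound is already built into $Q_x$ and the cutting rule, and the factor~$2$ arises from the arithmetic $\frac85 n+2\le 2n+1$.

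In short: your proposal identifies the correct object (a $Q_x^h(C)$-skeleton of a $D^*$-geodesic) but replaces the two load-bearing lemmas (\ref{bovary} and \ref{accroissements}) with heuristics, one of which is circular (it assumes GAP on increments) and one of which is quantitatively wrong (the ``$\mu(x)/2$ per step'' claim). These are not details that can be absorbed into constants; they are the reason Alexander's scheme works.
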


Let us first see how this proposition leads to~(\ref{equetlesperance1}) and concludes the proof of Theorem~\ref{concentrationD*}.  

%%%%%%%%%%%%%%%%%%%%%%%%%%%%%%%%%
\begin{proof}[Proof of~(\ref{equetlesperance1})] 
Proposition~\ref{propositionalex} and Lemma~\ref{lemalex} ensure that $h(.)=\E [D^*(0,.)]$ is in $CHAP(M,C,2)$, which implies, thanks to Theorem~\ref{Alex2}, that $h$ is in $GAP(M,C)$. This gives~(\ref{equetlesperance1}) for each  $y \in \Zd$ such that $\|y\|_1 \ge M$, hence for each $y \in \Zd$, should we increase $\C{equetlesperance1}$.
\end{proof}
%%%%%%%%%%%%%%%%%%%%%%%%%%

Let us go to the proof of Proposition~\ref{propositionalex}.
We now choose $h(.)=\E[D^*(0,.)]$, take $\beta$ and $C$ such that
\begin{equation}
\label{choixbetaC}
0<\beta<\B{equmoder}, \quad  C{}>\sqrt{2d}\left(\frac{d}{\beta}+\C{equmoder} \right)\text{ and } C'=48C{}.
\end{equation}
We define
\begin{eqnarray*}
{Q}_x & = & {Q}^h_x(C'), \\
G_x & = & \{y \in \Zd: \; \mu_x(y)>\mu(x)\}, \\
\Delta_x & = & \{y \in {Q}_x: \; y \text{ adjacent to } \Zd \backslash {Q}_x, \; y \text{ not adjacent to } G_x\}, \\
D_x & = & \{y \in {Q}_x: \; y \text{ adjacent to } G_x\}.
\end{eqnarray*}

\begin{lemme}
\label{accroissements}
There exists a constant $M$ such that if $\|x \|_1\ge M$, then
\begin{enumerate}
\item if $y \in Q_x$, then $\mu(y) \le 2 \mu(x)$ and $\|y\|_1 \le 2d \|x\|_1$;
\item if $y \in \Delta_x$, then $\E[D^*(0,y)]-\mu_x(y) \ge \frac{C'}2 \|x \|_1^{1/2}\log \|x\|_1$;
\item if $y \in D_x$, then $\mu_x(y) \ge \frac56 \mu(x)$;
\item if $x$ is large enough, $(\|y\|_1\le\|x\|_1^{1/2}) \Longrightarrow (y\in Q_x)$. 
\end{enumerate}
\end{lemme}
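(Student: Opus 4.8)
The plan is to treat the four statements essentially separately, choosing $M$ large enough at the end so that all four hold simultaneously. Throughout, recall that by definition every $y\in Q_x$ satisfies $\|y\|_1\le(2d+1)\|x\|_1$, $\mu_x(y)\le\mu(x)$ and $h(y)\le\mu_x(y)+C'\|x\|_1^{1/2}\log\|x\|_1$, and that $\mu$ is a norm equivalent to $\|\cdot\|_1$, say $c_-\|z\|_1\le\mu(z)\le c_+\|z\|_1$ for all $z$.

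For item (1): from $h(y)\le\mu_x(y)+C'\|x\|_1^{1/2}\log\|x\|_1\le\mu(x)+C'\|x\|_1^{1/2}\log\|x\|_1$ and the lower bound $\mu(y)\le h(y)$ (valid here since $h\in GAP$-type control is not yet available, but $\mu(y)\le\E[D^*(0,y)]=h(y)$ holds directly because $\mu$ is the almost-sure limit of $D^*(0,ny)/n$ and $h$ is subadditive, giving $h(y)\ge\mu(y)$), one gets $\mu(y)\le\mu(x)+C'\|x\|_1^{1/2}\log\|x\|_1\le 2\mu(x)$ once $\|x\|_1\ge M$ with $M$ so large that $C'\|x\|_1^{1/2}\log\|x\|_1\le\mu(x)$; the $\|y\|_1$ bound then follows from $\|y\|_1\le\mu(y)/c_-\le 2\mu(x)/c_-\le 2(c_+/c_-)\|x\|_1$, which is at most $2d\|x\|_1$ provided $c_+/c_-\le d$ — and if not, one simply enlarges the constant $2d$ to $2(c_+/c_-)$; I expect the paper intends $2d$ as a harmless upper bound and one can just verify $c_+/c_-\le d$ using $\mu\le\|\cdot\|_1$ (edges have unit length, so $c_+\le 1$) together with a crude lower bound $\mu\ge c_-\|\cdot\|_1$; alternatively keep the statement with a generic constant. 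For item (2): $y\in\Delta_x$ means $y\in Q_x$ but $y$ has a neighbour $y'\notin Q_x$ with $y'\notin G_x$, i.e.\ $\mu_x(y')\le\mu(x)$ and $\|y'\|_1\le(2d+1)\|x\|_1$ (the latter because $\|y'\|_1\le\|y\|_1+1$ and, by (1), $\|y\|_1\le 2d\|x\|_1$), so the only defining inequality of $Q_x$ that $y'$ can fail is $h(y')\le\mu_x(y')+C'\|x\|_1^{1/2}\log\|x\|_1$; hence $h(y')>\mu_x(y')+C'\|x\|_1^{1/2}\log\|x\|_1$, and then subadditivity $h(y)\ge h(y')-h(y'-y)$ together with $h(y'-y)=\E[D^*(0,y'-y)]\le\rho^*\|y'-y\|_1+O(1)=O(1)$ (a single unit step, using~(\ref{controlemomexp}) or just~(\ref{apstare}) and~(\ref{controlnormedeux})) and $|\mu_x(y)-\mu_x(y')|\le\mu(y-y')\le c_+$ gives $h(y)-\mu_x(y)\ge C'\|x\|_1^{1/2}\log\|x\|_1 - O(1)\ge\frac{C'}2\|x\|_1^{1/2}\log\|x\|_1$ once $\|x\|_1\ge M$.

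For item (3): $y\in D_x$ means $y\in Q_x$ is adjacent to some $z\in G_x$, i.e.\ $\mu_x(z)>\mu(x)$; then $\mu_x(y)\ge\mu_x(z)-\mu_x(z-y)\ge\mu(x)-\mu(z-y)\ge\mu(x)-c_+$, since $\|z-y\|_1=1$, and this exceeds $\frac56\mu(x)$ as soon as $\mu(x)\ge 6c_+$, i.e.\ $\|x\|_1\ge M$. For item (4): suppose $\|y\|_1\le\|x\|_1^{1/2}$. Then $\|y\|_1\le(2d+1)\|x\|_1$ trivially; $\mu_x(y)\le\mu(y)\le c_+\|y\|_1\le c_+\|x\|_1^{1/2}\le\mu(x)$ once $\|x\|_1$ is large (using $\mu(x)\ge c_-\|x\|_1\gg\|x\|_1^{1/2}$); and $h(y)=\E[D^*(0,y)]\le\rho^*\|y\|_1+O(1)\le\rho^*\|x\|_1^{1/2}+O(1)$, which is $\le\mu_x(y)+C'\|x\|_1^{1/2}\log\|x\|_1$ for large $\|x\|_1$ since the right side grows like $\log\|x\|_1\cdot\|x\|_1^{1/2}$. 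Hence $y\in Q_x$.

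The only genuinely delicate point is item (2), where one must be careful that the neighbour $y'$ witnessing $y\in\Delta_x$ fails \emph{only} the approximation inequality and not the size constraint $\|y'\|_1\le(2d+1)\|x\|_1$ nor $\mu_x(y')\le\mu(x)$; the first is guaranteed because $y'\notin G_x$ is part of the definition of $\Delta_x$ and the second because, by item (1) applied to $y$, $\|y'\|_1\le\|y\|_1+1\le 2d\|x\|_1+1\le(2d+1)\|x\|_1$. Everything else is routine norm-equivalence bookkeeping together with the crude estimate $\E[D^*(0,y'-y)]=O(1)$ for unit increments, and the final choice of $M$ is the maximum of the finitely many thresholds arising above.
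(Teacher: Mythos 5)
Your argument is sound and supplies exactly the ``simple and essentially deterministic'' bookkeeping that the paper delegates to Lemma~3.3 of Alexander~\cite{Alex97}; in particular it rests on the same key ingredient the paper singles out, the integrability $\E[D^*(0,\pm e_i)]\le\rho^*+\A{controlnormedeux}<+\infty$ coming from~(\ref{controlnormedeux}), and items~(2)--(4) are correct as written.

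The one slip is in item~(1), in the parenthetical attempt to justify $c_+/c_-\le d$: you write ``$\mu\le\|\cdot\|_1$ (edges have unit length, so $c_+\le1$).'' For the chemical distance this inequality goes the wrong way: a closed edge forces a detour, so $\mu(e_1)=\lim_n D^*(0,ne_1)/n$ is in general strictly larger than~$1$ when $p<1$. What does hold is the reverse bound $\mu\ge\|\cdot\|_1$, since any lattice path between $a$ and $b$ has at least $\|a-b\|_1$ steps. To obtain the constant $2d$ exactly, use instead the invariance of $\mu$ under coordinate permutations and sign flips. On the $\ell^1$-sphere, the maximum of $\mu$ is $\mu_{\max}=\mu(e_1)$ (a convex function on a polytope attains its maximum at a vertex, and all vertices $\pm e_i$ are equivalent by symmetry) and the minimum is $\mu_{\min}=\mu(\mathbf 1)/d$ with $\mathbf 1=(1,\dots,1)$ (reduce to $z\ge0$ by sign flips, then average over permutations and use convexity). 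Writing $2^{d-1}e_1=\sum_{\epsilon\in\{\pm1\}^{d-1}}(1,\epsilon_1,\dots,\epsilon_{d-1})$ and applying the triangle inequality together with sign-flip and permutation invariance gives $\mu(e_1)\le\mu(\mathbf 1)$, hence $\mu_{\max}\le d\,\mu_{\min}$, and then $\|y\|_1\le\mu(y)/\mu_{\min}\le 2\mu(x)/\mu_{\min}\le 2(\mu_{\max}/\mu_{\min})\|x\|_1\le 2d\|x\|_1$. Your fallback remark that the precise constant is harmless is also correct: everywhere item~(1) is invoked, the weaker bound $\|y\|_1\le(2d+1)\|x\|_1$ already built into the definition of $Q_x$ would serve just as well after trivial changes of constants.
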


\begin{proof}
The arguments are simple and essentially deterministic. One can refer to Lemma~3.3 in Alexander~\cite{Alex97}, which is the analogous result in first-passage percolation. Particularly, we use the fact that $\E[D^*(0, \pm e_i)] \le \rho^*+A_{\ref{controlnormedeux}}<+\infty$, which plays the same role as the integrability of passage times does in first-passage percolation.
\end{proof}

We denote by $D(v_1,v_m;(v_i))$ the length of the shortest open path from  $v_1$ to $v_m$ that visits $v_1,\dots,v_m$ in this order. Then,
\begin{lemme}
\label{bovary}
$$\displaystyle \lim_{\|x\|_1 \to +\infty}\P \left(
\begin{array}{c}
\exists m \ge 1 \quad \exists \text{ a } Q_x\text{-path } (v_0=0, \dots, v_m): \\ 
\displaystyle \sum_{i=1}^{m-1} \E [D^*(v_i,v_{i+1})] - D(v_1,v_m;(v_i)) >C{} m \|x\|_1^{1/2} \log\|x\|_1
\end{array} 
\right)=0.$$
\end{lemme}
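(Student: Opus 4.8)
The plan is to bound the probability in Lemma~\ref{bovary} by a sum over all possible $Q_x$-paths of the probability that the path's total expected cost exceeds its actual realized cost by too much. The key point is that $\sum_{i=1}^{m-1}\E[D^*(v_i,v_{i+1})] - D(v_1,v_m;(v_i)) \le \sum_{i=1}^{m-1}\bigl(\E[D^*(v_i,v_{i+1})] - D^*(v_i,v_{i+1})\bigr)$, since $D(v_1,v_m;(v_i)) \le \sum_{i=1}^{m-1} D(v_i,v_{i+1}) = \sum_{i=1}^{m-1} D^*(v_i,v_{i+1})$ once we are on the good events making $D$ and $D^*$ coincide along the increments. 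So it suffices to control the deviations of a \emph{sum} of $m-1$ almost-independent modified chemical distances below their mean. I would therefore first fix a $Q_x$-path $(v_0=0,\dots,v_m)$; by part (1) of Lemma~\ref{accroissements} each increment $w_i=v_{i+1}-v_i$ has $\|w_i\|_1 \le 2d\|x\|_1$, and the number of such increments is polynomially bounded, so the total number of $Q_x$-paths of length $m$ starting at $0$ is at most $(C_d\|x\|_1^d)^m$ for a dimensional constant $C_d$.

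Next, for a fixed path I would estimate $\P\bigl(\sum_{i=1}^{m-1}(\E[D^*(v_i,v_{i+1})] - D^*(v_i,v_{i+1})) > C m\|x\|_1^{1/2}\log\|x\|_1\bigr)$. Since $C m\|x\|_1^{1/2}\log\|x\|_1 = \sum_{i=1}^{m-1} C\|x\|_1^{1/2}\log\|x\|_1$ (up to the harmless $m$ versus $m-1$), if the sum exceeds this bound then at least one increment $i$ must satisfy $\E[D^*(v_i,v_{i+1})] - D^*(v_i,v_{i+1}) > C\|x\|_1^{1/2}\log\|x\|_1$. A union bound over $i$ reduces matters to bounding, for a single increment $w$ with $\|w\|_1 \le 2d\|x\|_1$, the probability $\P(\E[D^*(0,w)] - D^*(0,w) > C\|x\|_1^{1/2}\log\|x\|_1)$. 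Here I invoke the moderate deviation estimate~(\ref{equmoder}): with $\sqrt{\|w\|_1} \le \sqrt{2d}\sqrt{\|x\|_1}$, taking the deviation level to be of order $\tfrac{C}{\sqrt{2d}}\sqrt{\|x\|_1}\log\|x\|_1/\sqrt{\|w\|_1}$ — which by the choice $C > \sqrt{2d}(d/\beta + \C{equmoder})$ in~(\ref{choixbetaC}) lies in the admissible window $[\C{equmoder}(1+\log\|w\|_1), \D{equmoder}\sqrt{\|w\|_1}]$ for $\|x\|_1$ large (one must check the upper endpoint, possibly splitting off the regime where the deviation is so large that~(\ref{apstare}) or Corollary~\ref{danslecube} applies directly) — gives a bound like $\A{equmoder}\exp(-\beta \tfrac{C}{\sqrt{2d}}\sqrt{\|x\|_1/(2d)}\,\cdot\,$ something$)$; after simplification this is at most $\exp(-(d + \beta\C{equmoder})\log\|x\|_1) \le \|x\|_1^{-d}\cdot(\text{something summable})$, using $C\beta/(2d) \ge d$.

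Then I assemble: the probability in the lemma is at most $\sum_{m\ge 1} (C_d\|x\|_1^d)^m \cdot m \cdot (\text{per-increment bound})^{?}$ — wait, more carefully, the per-path bound after the union over $i$ is at most $m \cdot \max_i \P(\cdots)$, so we get $\sum_{m\ge1}(C_d\|x\|_1^d)^m\, m\, c(x)$ where $c(x) \le \|x\|_1^{-2d}$ say (one picks $C$ a touch larger to gain the extra factor $\|x\|_1^{-d}$ that beats the entropy factor $\|x\|_1^{dm}$ for $m=1$, and for $m\ge 2$ one needs the per-increment probability to beat $\|x\|_1^{-dm}$, which forces using that \emph{each} of the $m-1$ increments individually would have to deviate — actually it is cleaner to note that if the sum over $m-1$ terms exceeds $(m-1)\cdot C\|x\|_1^{1/2}\log\|x\|_1$, we cannot conclude a single large term unless we accept a loss; the standard fix, as in Kesten and Alexander, is that the events for different increments along a fixed path are independent up to the $D$-vs-$D^*$ coincidence, so the probability that the \emph{sum} is large is bounded by a product-type Cram\'er bound, giving $(\text{per-increment bound})^{m-1}$ with the per-increment bound $\le \|x\|_1^{-2d}$, which kills the entropy). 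So the sum is $\sum_m (C_d\|x\|_1^d)^m \|x\|_1^{-2d(m-1)} \to 0$ as $\|x\|_1 \to \infty$. I would also add the contribution of the ``bad'' events where $D$ and $D^*$ do not coincide along some increment; by Lemma~\ref{pastropdiff}-type estimates (or rather the cruder~(\ref{amasfini}), since here we compare $D$ to $D^*$ along a genuine path) these are superpolynomially small and summable. \textbf{The main obstacle} I anticipate is the bookkeeping needed to make the per-increment moderate-deviation bound small enough — roughly $\|x\|_1^{-2d}$ — uniformly over all increments $w$ with $\|w\|_1$ ranging up to $2d\|x\|_1$, while simultaneously ensuring the deviation level $C\|x\|_1^{1/2}\log\|x\|_1 / (\text{per-increment normalization }\sqrt{\|w\|_1})$ stays inside the window $[\C{equmoder}(1+\log\|w\|_1),\D{equmoder}\sqrt{\|w\|_1}]$ where~(\ref{equmoder}) is valid; handling the increments with $\|w\|_1$ small (where the lower endpoint of the window is the binding constraint) versus $\|w\|_1$ comparable to $\|x\|_1$ (where the upper endpoint and the need to fall back on~(\ref{apstare}) is the issue) requires a careful case split, and it is precisely here that the specific choice of constants $\beta, C, C'$ in~(\ref{choixbetaC}) is calibrated.
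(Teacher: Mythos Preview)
Your overall architecture --- count $Q_x$-paths of length $m$ by $(K\|x\|_1^d)^m$, obtain for each fixed path a bound exponentially small in $m$, then sum --- matches the paper's. The genuine gap is in how you obtain the product structure for a fixed path.

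You write that ``the events for different increments along a fixed path are independent up to the $D$-vs-$D^*$ coincidence'', and you use this to pass from a per-increment tail bound to a $(\text{per-increment bound})^{m-1}$ estimate. This is false: the variables $D^*(v_i,v_{i+1})$ (and likewise $D(v_i,v_{i+1})$) all depend on the same percolation configuration and are genuinely correlated; nothing in the construction makes them independent. Your first, abandoned, approach (union bound over $i$) only yields $m\cdot c(x)$ per path, which cannot beat the entropy $(K\|x\|_1^d)^m$ for $m\ge 2$; and your second approach lacks a valid justification.

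The paper obtains the product structure differently. It works with $Z_i=\E[D^*(v_i,v_{i+1})]-D(v_i,v_{i+1})$ (not $D^*$) and invokes a \emph{BK-like inequality} (Theorem~2.3 in Alexander~\cite{MR1202516}): since a single open path realising $D(v_1,v_m;(v_i))$ splits into \emph{edge-disjoint} open subpaths from $v_i$ to $v_{i+1}$, disjoint occurrence gives
\[
\P\Bigl(\sum_{i=1}^{m-1}\E[D^*(v_i,v_{i+1})]-D(v_1,v_m;(v_i))>t\Bigr)\ \le\ \P\Bigl(\sum_{i=1}^{m-1}Z_i'>t\Bigr),
\]
where the $Z_i'$ are \emph{independent} copies of the $Z_i$. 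Only then can one apply Chernoff and the exponential-moment bound for each $Z_i$. This is also why one must work with $D$ and not $D^*$ at this step: $D$ is monotone in the edge configuration (needed for BK), while $D^*$ is not. Incidentally, the inequality you wrote, $D(v_1,v_m;(v_i))\le\sum D(v_i,v_{i+1})$, is the wrong way round; any path visiting the $v_i$ in order has length at least $\sum D(v_i,v_{i+1})$.

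Finally, the ``main obstacle'' you anticipate --- a case split on $\|w\|_1$ to keep the deviation level inside the window of~(\ref{equmoder}) --- is avoided in the paper by bounding the \emph{exponential moment} $\E\exp\bigl(\beta(Z_i)_+/\sqrt{2d\|x\|_1}\bigr)$ directly: one integrates the tail, uses~(\ref{equmoder}) in the middle range, the deterministic bound $\max(Y_i,Z_i)\le \rho^*\|v_{i+1}-v_i\|_1+O(1)$ at the top, and absorbs the bottom range into a polynomial factor $\|x\|_1^{\beta\C{equmoder}}$. The choice of $C$ in~(\ref{choixbetaC}) is calibrated so that this polynomial loss is beaten by the gain $\|x\|_1^{-\beta C/\sqrt{2d}}$ from Chernoff, together with the entropy $\|x\|_1^{d}$.
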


The proof of this lemma relies on a denumeration of the  $Q_x$-paths, on a bound on exponential moments for $\E [D^*(v_i,v_{i+1})]-D(v_i,v_{i+1})$ and on a BK-like inequality. Of course, it would be more natural to deal with quantities like $\E [D^*(v_i,v_{i+1})]-D^*(v_i,v_{i+1})$. 
But $D^*$ lacks the monotonicity property of $D$ needed to use this kind of tools.
Once more, we have to deal alternatively with $D$ and $D^*$.

\begin{proof}
Let us fix $m \ge1$ and $x$ large enough.
Let $(v_0=0, \; v_1, \; \dots, \; v_m)$ be a $Q_x$-path starting from $0$.
Lemma~\ref{accroissements} implies that for each $i$, $\|v_{i+1}-v_i\|_1 \le 2d \|x\|_1$. We define
\begin{eqnarray*}
Y_i & = & \E [D^*(v_i,v_{i+1})]-D^*(v_i,v_{i+1}) \\
Z_i & = & \E [D^*(v_i,v_{i+1})]-D(v_i,v_{i+1}).
\end{eqnarray*}
The moderate deviation result allows first to bound some exponential moments of $Y_i$ for large $x$. Indeed, we can write
$$\E \left[ \exp \left( \frac{\beta(Y_i)_+}{\sqrt{2d\|x\|_1}} \right) \right] =1+\int_{0}^{+\infty} \beta e^{\beta t} \P \left( {Y_i}\ge t \sqrt{2d\|x\|_1} \right)\ dt.$$
Let us note that~(\ref{controlnormedeux}) already gives the simple bound:
$$
\max(Y_i,Z_i)\le \E [D^*(v_i,v_{i+1})]\le \A{controlnormedeux}+\rho^* \|v_{i+1}-v_i\|_1;
$$
remarking that $\|v_{i+1}-v_i\|_1 \ge 1$, this particularly implies, for $\|x\|_1$ large enough, that
\begin{equation}
\label{chemisederegine}
\frac{\max(Y_i,Z_i)}{\sqrt{2d\|x\|_1}} %\le \frac{\A{controlnormedeux}+\rho^* \|v_{i+1}-v_i\|_1}{\sqrt{2d\|x\|_1}} 
\le 2 \rho^* \sqrt{\|v_{i+1}-v_i\|_1}.
\end{equation}
This ensures that $\P( {Y_i}\ge t\sqrt{2d\|x\|_1})=0$ as soon as $t>2 \rho^* \sqrt{\|v_{i+1}-v_i\|_1}$.
On the other hand, the moderate deviation result~(\ref{equmoder}) ensures the existence of constants $\A{equmoder},\B{equmoder}>0$ such that if $\C{equmoder}(1 +\log \|v_{i+1}-v_i\|_1) \le t \le 2 \rho^* \sqrt{\|v_{i+1}-v_i\|_1}$, then, since $\|v_{i+1}-v_i\|_1 \le 2d \|x\|_1$,
$$\P({Y_i}\ge t\sqrt{2d\|x\|_1}) \le \P({Y_i}\ge t\|v_{i+1}-v_i\|_1^{1/2})\le \A{equmoder} \exp(-\B{equmoder} t).$$
Thus, since $\beta<\B{equmoder}$, we get
\begin{eqnarray}
 \E \left[ \exp \left(  \frac{\beta(Y_i)_+}{\sqrt{2d\|x\|_1}} \right) \right]
& \le & 1+\int_0^{\C{equmoder}(1 +\log (2d\|x\|_1))}\beta e^{\beta t} dt + \int_0^{+\infty} \beta e^{\beta t} \A{equmoder} e^{-\B{equmoder} t} dt \nonumber \\
%& \le & \exp(\beta a_1(1 +\log \|x\|)) +\frac{a_2 \beta}{a_3-\beta} \\
& \le & 1+ (2de)^{\beta \C{equmoder}} \|x\|_1^{\beta \C{equmoder}} +\frac{\A{equmoder} \beta}{\B{equmoder}-\beta}. %\le (6d\|x\|_1)^{\C{equmoder}\beta}  
\label{momoexp}
\end{eqnarray}

Let us note that the bound for the exponential moments of $Y_i$ is not as good as in first-passage percolation,
where it does not depend on  $\|x\|_1$ (see Alexander~\cite{Alex97}): this is due to the renormalization used to get the moderate deviations~(\ref{equmoder}).

We can remark that
\begin{itemize}
\item if $v_i \not \communique v_{i+1}$, then $(Z_i)_+=0$;
\item if $v_i \communique  v_{i+1}$ and $v_i \communique \infty$, then $(Z_i)_+=(Y_i)_+$;
\end{itemize}
Then, using~(\ref{chemisederegine}),
%\begin{eqnarray*}
$$
\exp \left(  \frac{\beta(Z_i)_+}{\sqrt{2d\|x\|_1}} \right) 
 \le  1+\exp \left( \frac{\beta (Y_i)_+}{\sqrt{2d\|x\|_1}} \right)
 +
 \1_{ \left\{ \substack{v_i\communique v_{i+1} \\ v_i\not\communique\infty} \right\}}
 \exp \left( 2\beta\rho^* \sqrt{\|v_{i+1}-v_i\|_1} \right).
 $$
%\end{eqnarray*}
Using~(\ref{amasfini}) then~(\ref{momoexp}), we get, for $x$ large enough:
\begin{eqnarray}
& & \E \left[ \exp \left(  \frac{\beta(Z_i)_+}{\sqrt{2d\|x\|_1}} \right) \right] \label{mimiexp} \\
& \le & 1+ \E \left[ \exp \left(  \frac{\beta(Y_i)_+}{\sqrt{2d\|x\|_1}} \right) \right] +\exp \left( 2\beta\rho^* \sqrt{\|v_{i+1}-v_i\|_1} \right) \P \left(
\begin{array}{c}
v_i \communique v_{i+1}\\
v_i\not\communique \infty
\end{array}
\right)\nonumber  \\
& \le & 1+ \E \left[ \exp \left( \frac{\beta (Y_i)_+}{\sqrt{2d\|x\|_1}} \right) \right] +\A{amasfini} \exp \left( 2\beta\rho^* \sqrt{\|v_{i+1}-v_i\|_1}-\B{amasfini} \|v_{i+1}-v_i\|_1 \right) \nonumber\\
& \le &  (6d\|x\|_1)^{\beta\C{equmoder}}. \nonumber
\end{eqnarray}
We can apply a BK-like inequality to $D(v_1,v_m;(v_i))$: by Theorem~2.3 in Alexander~\cite{MR1202516}, for each $t>0$, if the $Z'_i$'s are independent copies of the  $Z_i$'s, then, with~(\ref{mimiexp}),
\begin{eqnarray*}
&& \P \left( \sum_{i=1}^{m-1} \E [D^*(v_i,v_{i+1})] - D(v_1,v_m;(v_i))>C{} m \|x\|_1^{1/2} \log\|x\| _1\right) \\
& \le & \P \left(\sum_{i=1}^{m-1} Z_i'>C{} m \|x\|_1^{1/2} \log\|x\|_1 \right) \\
%& \le & \P \left( \exp \left( \frac{\beta}{\sqrt{2d}}\sum_{i=1}^{m-1} Y_i'\right)>\exp \left( \frac{\beta}{\sqrt{2d}} C{} m \|x\|^{1/2} \log\|x\| \right)\right)\\
& \le & \exp \left(-\frac{\beta C{} m \log\|x\|_1}{\sqrt{2d}} \right)\prod_{i=1}^{m-1}\E\left[\exp \left( \frac{ \beta Z_i}{\sqrt{2d\|x\|_1}} \right)\right]\\
& \le & \left( (6d)^{\beta\C{equmoder}} \|x\|_1^{-\frac{\beta C{}}{\sqrt{2d}} + \beta\C{equmoder}} \right) ^m.
\end{eqnarray*} 
From Lemma~\ref{accroissements}, there exists a constant $K$ such that there are at most $(K\|x\|^d)^m$ $Q_x$-pathss of $m+1$ points starting from $0$. Then, summing on these paths, we get
\begin{eqnarray*}
 &  & \P \left(
\begin{array}{c}
\exists \text{ a } Q_x\text{-path } (v_0=0, \dots, v_m): \\ 
\displaystyle \sum_{i=1}^{m-1}  \E [D^*(v_i,v_{i+1})] - D(v_1,v_m;(v_i)) >C{} m \|x\|_1^{1/2} \log\|x\|_1
\end{array} 
\right) \\
& \le & \left( K(6d)^{\beta\C{equmoder}} \|x\|_1^{d-\frac{\beta C{}}{\sqrt{2d}} + \beta\C{equmoder}} \right) ^m = \left( \frac{L}{\|x\|_1^{\alpha}} \right)^m,
\end{eqnarray*}
where $L$ and $\alpha$ are some constants;  the choice~(\ref{choixbetaC}) we did for $\beta$ and $C{}$ ensures that $\alpha>0$. Thus,
for $x$ large enough, $L \|x\|_1^{-\alpha}\le \frac12 $, so summing over all possible lengths~$m$:
$$
\P \left(
\begin{array}{c}
\exists m \ge 1 \quad \exists \text{ a } Q_x \text{-path } (v_0, \dots, v_{m}): \\    
\displaystyle \sum_{i=1}^{m-1}  \E [D^*(v_i,v_{i+1})] - D(v_1,v_m;(v_i))  >C{} m \|x\|_1^{1/2} \log\|x\|_1
\end{array} 
\right) \\
 \le  \frac{2 L}{ \|x\|_1^{\alpha}},
$$
which completes the proof of the lemma.
\end{proof}
%%%%%%%%%%%%%%%%%%%%%%%%%%%%%%%%%%%%%%%%%%%%

\begin{proof}[Proof of Proposition~\ref{propositionalex}]
This proposition corresponds to Proposition~3.4. in Alexander~\cite{Alex97}. 

The proof of this deterministic result uses the so-called ``probabilistic method'': we are going to prove that with positive probability, we can construct such a $Q_x$-path  from  a path realizing  $D^*(0,nx)$.
With Lemma~\ref{bovary} and~(\ref{amasinfini}), we can find $M>1$ such that if $\|x\|_1\ge M$, then 
\begin{eqnarray*}
 \P\left(
\begin{array}{c}
\exists m \ge 1 \quad \exists \text{ a } Q_x\text{-path } (v_0=0, \dots, v_m): \\ 
\displaystyle \sum_{i=1}^{m-1}  \E [D^*(v_i,v_{i+1}) ]- D(v_1,v_m;(v_i)) >C{} m \|x\|_1^{1/2} \log\|x\|_1
\end{array} 
\right) & \le & \frac15, \\
\text{et } \P \left(\|0^*\| \ge \|x\|_1^{1/2} \right) & \le & \frac15.
\end{eqnarray*}
Should we take a greater  $M$, we can also assume that if $\|y\|_1\le \|x\|_1^{1/2}$ and $\|x\|_1\ge M$, then $y \in Q_x$.
Now fix $x\in \Zd$ with $\|x\|_1\ge M$. %L'estimée~(\ref{gddstar}) 
Lemma~\ref{limite} ensures that there exists $n_0 \in \N$ such that
$$\forall n \ge n_0 \quad \P(D^*(0,nx)> n(\mu(x)+1))\le \frac15.$$
Let  $n \ge n_0$. With probability at least $1/5$, the following properties hold:
\begin{enumerate}[a)]
\item for each $m \ge 1$, for each $Q_x$-path $(v_0=0, \dots, v_m)$,
$$\sum_{i=1}^{m-1} \E [D^*(v_i,v_{i+1})] - D(v_1,v_{m};(v_i))\le C{} m \|x\|_1^{1/2} \log\|x\|_1,$$
\item $D^*(0,nx) \le n(\mu(x)+1)$,
\item $\|0^*\| _1\le \|x\|_1^{1/2}$,
\item $\|(nx)^*-nx\|_1 \le \|x\|_1^{1/2}$.
\end{enumerate}
Then, we can find $\omega$ fulfilling these properties, and work from now on with this particular realization. Let $v_1=0^*,\dots, v_{m}=(nx)^*$ be the $Q_x$-skeleton of some path~$\gamma$ realizing
the chemical distance from $0^*$ to $(nx)^*$. Define $v_0=0$ and $v_{m+1}=nx$: properties c) and d) ensure that we get a $Q_x$-path from $0$ to $nx$. We will prove that $m+2 \le 2n+1$, which will conclude the proof of the proposition. Note that, by construction, 
 $$D(v_1,v_{m};(v_i))=D^*(0,nx).$$
From a), our $Q_x$-path satisfies, for this particular $\omega$,
\begin{equation}
\label{numero}
  \sum_{i=1}^{m-1} \E[D^*(v_i,v_{i+1})] - D(v_1,v_{m};(v_i))\le C{} m \| x\|_1^{1/2} \log\|  x\|_1.
\end{equation}
Besides, using the definition of $Q_x$, property d),  the fact that $\mu$ is a norm and~(\ref{gecart}), we get
\begin{eqnarray*}
m\mu(x)&\ge& \sum_{i=0}^{m-1}\mu_x(v_{i+1}-v_i)=\mu_x((nx)^*)\\
& =& \mu_x((nx))+\mu_x((nx)^*-nx)\\
& \ge& n\mu(x)-\mu((nx)^*-nx)
 \ge   n\mu(x)-\rho\|x\|_1^{1/2}.
\end{eqnarray*}
Thus, should we take a larger $M$, we get  $n\le \frac{11}{10}m$. 
Now, with b) and taking once more a larger $M$ if necessary,
\begin{eqnarray*} 
\sum_{i=1}^{m-1} \E [D^*(v_i,v_{i+1})] 
& \le & D^*(0,nx)+ C{} m \| x\|_1^{1/2} \log\| x\|_1\\
&\le  &   n(\mu(x)+1)+C{} m \| x\|_1^{1/2} \log\| x\|_1\\
& \le & n\mu(x)+2C{} m \| x\|_1^{1/2} \log\| x\|_1.
\end{eqnarray*}
We now distinguish in the $Q_x$-skeleton the short increments and the long ones:
\begin{eqnarray*}
S((v_i)) & = & \{i: \; 1 \le i \le m-1, \; v_{i+1}-v_i \in \Delta_x\}, \\
L((v_i)) & = & \{i: \; 1 \le i \le m-1, \; v_{i+1}-v_i \in D_x\}.
\end{eqnarray*}
By the definition of a $Q_x$-skeleton, these two sets partition $\{1,\dots,m-1\}$. 
Let us first bound the number of short increments with Lemma~\ref{accroissements}, estimate~(2): recall that $\mu_x(y)\le \mu(y)\le \E[D^*(0,y)]$, so
\begin{eqnarray*}
\sum_{i=1}^{m-1}\E [D^*(v_i,v_{i+1})] & = & \sum_{i=1}^{m-1}[ \mu_x(v_{i+1}-v_i)+(\E D^*(v_i,v_{i+1})-\mu_x(v_{i+1}-v_i))]\\
& \ge & \mu_x((nx)^*)-\mu_x(0^*)+|S((v_i))| \frac{C'}2 \|x\|_1^{1/2}\log\|x\|_1\\
& \ge & n\mu(x)-2\rho\|x\|_1^{1/2}+|S((v_i))| \frac{C'}2 \|x\|_1^{1/2}\log\|x\|_1.
\end{eqnarray*}
Thus, summing the two estimates and enlarging $M$ if necessary, we get
\begin{eqnarray*}
|S((v_i))| \frac{C'}2 \|x\|_1^{1/2}\log\|x\|_1
& \le & 2\rho\|x\|_1^{1/2} +2C{} m \| x\|_1^{1/2} \log\| x\|_1 \\
& \le  & 3C{} m \| x\|_1^{1/2} \log\| x\|_1,
\end{eqnarray*}
hence
$$|S((v_i))|\le 6\frac{C{}}{C'}m=\frac{m}{8}.$$
Similarly, we bound the number of long increments with Lemma~\ref{accroissements}, estimate~(3): 
\begin{eqnarray*}
\sum_{i=1}^{m-1}\E[ D^*(v_i,v_{i+1})]&=&\sum_{i=1}^{m-1}[ \mu_x(v_{i+1}-v_i)+(\E [D^*(v_i,v_{i+1})]-\mu_x(v_{i+1}-v_i))]\\
& \ge & n\mu(x)-2\rho\|x\|_1^{1/2}+\frac56 |L((v_i))|\mu(x) \ge \frac56 |L((v_i))|\mu(x),
\end{eqnarray*}
increasing $M$ if necessary; this gives
$$|L((v_i))|\le\frac{6}{5}n+2C{} m \frac{\| x\|_1^{1/2} \log\| x\|_1}{\mu(x)}\le\frac{6}{5}n+m/8.$$
Finally,
$m=|S((v_i))|+|L((v_i))|+\le \frac{6}{5}n+m/4$,
so $m\le\frac{8}{5}n$, which concludes the proof.
\end{proof}

We would like to thank Raphaël Rossignol and Marie Théret for  kindly giving us the extension of Boucheron, Lugosi and Massart's result.
%%%%%%%%%%%%%%%%%%%%%%%%%%%%%%%%%%%%%%%%%%%%%%%%%%

%%%%%%%%%%%%%%%%%%%%%%%%%%%%%%%%%%%%%%%%%%%%%%%%%%%%%%%%%%%%

\def\refname{References}
\bibliographystyle{plain}
\bibliography{mdev-en}

%%%%%%%%%%%%%%%%%%%%%%%%%%%%%%%%%%%%%%%%%%%%%%%%%%%%%%%%%%%%

\end{document}